\date{}
\definecolor{sah}{rgb}{0.66,0.33, 0.04}
\definecolor{adel4}{cmyk}{1,0,0,0}
\definecolor{adel3}{rgb}{0.66,0.33, 0.04}
\definecolor{adel1}{cmyk}{0,0.20,1,0}
\definecolor{adel2}{cmyk}{0,0.40,1,0.30}
\definecolor{adel0}{rgb}{0.99,0.60, 0.30}
\definecolor{trut}{rgb}{0.99,0.80, 0.00}
\definecolor{trus}{rgb}{0.00, 0.50, 0.00}
 \definecolor{trust}{rgb}{0.99, 0.99, 0.80}
\definecolor{MaCouleur}{rgb}{0,0.9,0.3}
\def\virgp{\raise 2pt\hbox{,}}
\def\({\left(}
\def\){\right)}
\def\<{\langle}
\def\>{\rangle}
\theoremstyle{plain}
\newtheorem{Theo}{Theorem}
\newtheorem{lemma}{Lemma}
 \newtheorem{prop}{Proposition}
 \theoremstyle{definition}
 \newtheorem*{rema}{Remark}
\newcommand{\CC}{\mathbb{C}}
\newcommand{\NN}{\mathbb{N}}
\newcommand{\T}{\mathbb{T}}
\newcommand{\RR}{{\mathbb R}}
\subjclass[2000]{76B03 ; 35Q35} \keywords{ Kirchhoff vortices, rotating patches, bifurcation}
 \title[]{Bifurcation of rotating patches  from Kirchhoff vortices }
\author[T. Hmidi]{Taoufik Hmidi}
\address{IRMAR, Universit\'e de Rennes 1\\ Campus de
Beaulieu\\ 35~042 Rennes cedex\\ France}
\email{thmidi@univ-rennes1.fr}
\author[J. Mateu]{Joan Mateu}
\address{Departament de Matem\`{a}tiques\\
Universitat Aut\`{o}noma de Barcelona\\
08193 Bellaterra, Barcelona, Catalonia} \email{ mateu@mat.uab.cat}
\begin{document}
\maketitle{}
\begin{abstract}
In this paper we prove the existence of countable branches of rotating patches bifurcating from the ellipses at some implicit angular velocities.
 \end{abstract}
\section{Introduction}
 In this paper we deal with  the vortex motion for incompressible  Euler equations in two-dimensional space. The formulations velocity-vorticity is given by the nonlinear transport equation \begin{equation}\label{vorticity}
\left\{ \begin{array}{ll}
\partial_{t}\omega +v\cdot \nabla\, \omega=0,\\
 v=\nabla^\perp\Delta^{-1}\omega,\\
\omega_{|t=0} =\omega_{0},
\end{array} \right.
\end{equation}
where $\omega$ denotes the vorticity of the velocity field $v=(v^1,v^2)$ and it is  given by $\omega=\partial_1 v^2-\partial_2 v^1$. The second equation in \eqref{vorticity} is nothing but the Biot-Savart law which can be written with a singular operator as follows:  by identifying $v=(v_1,v_2)$ with $v_1+iv_2$,  we write
\begin{equation}\label{Biot}
v(t,z)=\frac{i}{2\pi} \int_\CC
\frac{\omega(t,\xi)}{\overline{z}-\overline{\xi}}\,dA
(\xi),\quad z \in \CC,
\end{equation}
with $dA$ being the planar Lebesgue measure.  Global existence of  classical solutions is a consequence of the transport structure of the vorticity equation, for more details about this subject we refer to 
\cite{BM, Ch}.
 For less regular initial data 
Yudovich proved in \cite{Y1} that the system \eqref{vorticity} admits   a unique global
solution in the weak sense when the initial vorticity $\omega_0$
lies in $L^1 \cap L^\infty$. This allows to deal rigorously with the vortex patches which are the characteristic function of bounded domains. Therefore, it follows that when $\omega_0={\chi}_{D_0}$ with $D_0$ a bounded domain then the solution of \eqref{vorticity} preserves this structure and  $\omega(t)={\chi}_{D_t},$ with $D_t=\psi(t,D_0)$ being
the image  of $D_0$ by the flow. 
In the special case where $D_0$ is the open unit disc the vorticity
is radial and thus we get a steady flow.  Another  remarkable exact solution was   discovered by Kirchhoff \cite{Kirc} who proved that  an ellipse
$D_0$ 
performs a steady rotation about its center. More precisely, if the center is assumed to be the origin then  $D_t = e^{i t \Omega}\,D_0$, where the angular
velocity $\Omega$ is determined by the semi-axes $a$ and $b$ of the
ellipse through the formula $\Omega = ab/(a+b)^2.$ These ellipses are often referred in  the literature as Kirchhoff vortices. For a proof, see for
instance \cite[p.304]{BM} and \cite [p.232]{L}. 

The existence of general class of rotating patches, called also V-states, was  discovered numerically by Deem and Zabusky \cite{DZ}. Later on, Burbea gave an analytical proof and showed 
the  existence of the 
 $m$-fold symmetric V-states for each integer $m \geq 2$ and in this countable family the case $m=2$ corresponds to the known  Kirchhoff's ellipses.
Burbea's approach consists in using some complex analysis tools combined  with the bifurcation theory.  Notice that in this framework, the rotating patches appear as a countable collection of curves bifurcating from Rankine vortices (trivial solution) at the discrete angular velocities set $\big\{\frac{m-1}{2m}, m\geq 2\big\}.$ It is extremely interesting to look at the pictures of the limiting V-states done in  \cite{WOZ}, which are the end points of each branch. The boundary develops corners at right angles. Recently, the authors studied in \cite{HMV} the boundary regularity of the V-states close to the disc and proved    that the boundaries  are  in fact  of class   $C^\infty$ and convex. More recently, Castro, C\'ordoba and G\'omez-Serrano proved in \cite{Cor22} the analyticity of the V-states close to the disc. Notice that the existence and the regularity of the V-states for more singular nonlinear transport equations arising in geophysical flows as the surface quasi-geostrophic equations has been  studied recently in \cite{Cor1,Cor22,H-H}.

Another connected subject which has been investigated very recently in a series of paper \cite{Flierl,H-H-H, H-F-M-V,HMV2}
 is the existence of doubly connected V-states (patches with one hole).
 
The main goal of this paper is to study the second  bifurcation of rotating patches from Kirchhoff ellipses corresponding to $m=2.$ This subject was first examined by Kam in \cite{Kam}, who gave numerical evidence of the existence of some branches bifurcating from the ellipses, see also \cite{Saf}. We mention that the first bifurcation occurs at the aspect ratio $3$ corresponding to the transition regime from stability to instability. In the paper \cite{Luz} of Luzzatto-Fegiz and Willimason one can find more details about the diagram for the first bifurcations and some illustrations of the limiting V-states.  Another central problem which has been studied since the work of Love \cite{Love} is the linear and nonlinear stability of the ellipses. For instance, we mention the following papers  \cite{Guo,Tang}. As to the linear stability of the m-folds symmetric V-states, it  was conducted  by    Burbea and Landau in  \cite{Landau}. However the nonlinear stability of these structures in a small neighborhood of Rankine vortices   was done by Wan in \cite{Wan}. For further numerical  discussions, see also \cite{Cerr,DR,Mit}.

 In the current paper we intend to give  an analytical proof of the bifurcation from the ellipses. Our result reads as follows.  

 \begin{Theo}\label{hs10}
Consider the family of the ellipses $\mathcal{E}: Q\in ]0,1[\mapsto \mathcal{E}_Q$ given by the parametrization
$$
\mathcal{E}_Q=\Big\{w+\frac{Q}{w}, w\in \T\Big\}.
$$
Let $$
\mathcal{S}\triangleq \Big\{Q\in]0,1[, \,\exists\,\,\,  m\geq 3,\,\,  1+Q^m-\frac{1-Q^2}{2} m=0\Big\}.
$$
Then for each $Q\in \mathcal{S}$ there exists a nontrivial curve of  rotating vortex patches bifurcating from the curve $\mathcal{E}$ at the ellipse $\mathcal{E}_Q$. Moreover the boundary of these V-states are $C^{1+\alpha}$, $\forall \alpha\in ]0,1[$.

\end{Theo}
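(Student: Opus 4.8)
The plan is to prove the theorem by the Crandall--Rabinowitz bifurcation theorem, following the complex-analytic scheme of Burbea and of \cite{HMV}, but with the twist that the bifurcation now takes place \emph{along} the one-parameter family $\mathcal E$ of Kirchhoff ellipses rather than at an isolated Rankine vortex. The first step is to set up the functional equation for rotating patches. Parametrizing the boundary of a candidate V-state by the exterior conformal map $w\mapsto\phi(w)=w+f(w)$ of $\{|w|>1\}$, with $f(w)=\sum_{n\ge1}a_n w^{-n}$, a bounded patch with boundary $\phi(\T)$ rotates steadily at angular velocity $\Omega$ if and only if
\[
G(\Omega,f)(w)\triangleq \textnormal{Im}\left\{\Big(2\Omega\,\overline{\phi(w)}+\frac{1}{2\pi i}\oint_{\T}\frac{\overline{\phi(w)}-\overline{\phi(\tau)}}{\phi(w)-\phi(\tau)}\,\phi'(\tau)\,d\tau\Big)\,w\,\phi'(w)\right\}=0,\qquad w\in\T .
\]
Using the $C^{\alpha}$-boundedness of Cauchy-type operators on curves close to the circle, exactly as in \cite{HMV}, one checks that $G$ is a $C^{1}$ (indeed real-analytic) map from a neighbourhood of $0$ in $\R\times X_{\alpha}$ into $Y_{\alpha}$, where
\[
X_{\alpha}=\Big\{f\in C^{1+\alpha}(\T):\ f(w)=\textstyle\sum_{n\ge1}a_n w^{-n},\ a_n\in\R\Big\},\qquad
Y_{\alpha}=\Big\{g\in C^{\alpha}(\T):\ g(w)=\textstyle\sum_{k\ge2}c_k\,\textnormal{Im}(w^{k}),\ c_k\in\R\Big\},
\]
the reality of the coefficients encoding symmetry with respect to the real axis and the normalizations (center at the origin, unit leading coefficient) together with reflection symmetry accounting for the restriction to modes $k\ge2$. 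By the Kirchhoff relation $\Omega=ab/(a+b)^{2}$, for $f\equiv0$ (that is, $\phi=\phi_{Q}$ with $\phi_{Q}(w)=w+Q/w$, semi-axes $a=1+Q$, $b=1-Q$) one has $G(\Omega_{Q},0)=0$ with $\Omega_{Q}=\frac{1-Q^{2}}{4}=\frac12\cdot\frac{1-Q^{2}}{2}$; since $Q\mapsto\Omega_{Q}$ is a decreasing diffeomorphism of $]0,1[$ onto $]0,\tfrac14[$ with inverse $Q(\Omega)=\sqrt{1-4\Omega}$, I would put $\widetilde G(\Omega,f)\triangleq G(\Omega,\phi_{Q(\Omega)}-\textnormal{id}+f)$, so that $\widetilde G(\Omega,0)\equiv0$ on $]0,\tfrac14[$ and one is in the standard Crandall--Rabinowitz setting with trivial branch $\{(\Omega,0)\}$.

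The core of the argument — and the place where the real work lies — is the computation of the linearized operator $\mathcal L_{Q}\triangleq\partial_{f}\widetilde G(\Omega_{Q},0)=D_{f}G(\Omega_{Q},\phi_{Q})$. The key point is that $\phi_{Q}$ and the analytic representative $w^{-1}+Qw$ of $\overline{\phi_{Q}}$ on $\T$ are rational, so the singular integrals in $D_{f}G(\Omega_{Q},\phi_{Q})$ can be evaluated by residues at $\tau=0$ and $\tau=Q/w$; carrying this out should give a \emph{triangular (``staircase'') structure}
\[
\mathcal L_{Q}(w^{-n})=\lambda_{n+1}(Q)\,\textnormal{Im}(w^{\,n+1})+\sum_{k=2}^{n}\gamma_{n,k}(Q)\,\textnormal{Im}(w^{\,k}),\qquad
\lambda_{m}(Q)=1+Q^{\,m}-2m\,\Omega_{Q}=1+Q^{\,m}-\frac{m(1-Q^{2})}{2},
\]
valid for all $n\ge1$ (as a sanity check, the case $n=1$ gives $\mathcal L_{Q}(w^{-1})=2Q^{2}\,\textnormal{Im}(w^{2})$, which agrees with differentiating the identity $G(\Omega_{Q},\phi_{Q})\equiv0$ along the ellipse family, since $\partial_{Q}\phi_{Q}=w^{-1}$ and $\partial_{\Omega}G(\Omega_{Q},\phi_{Q})=4Q\,\textnormal{Im}(w^{2})$). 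Establishing this staircase form, and in particular pinning down the closed expression for $\lambda_{m}$, is the main obstacle; once it is in hand everything else is soft. Indeed $\mathcal L_{Q}$ is then a bounded Fredholm operator of index zero which is an isomorphism whenever $\lambda_{k}(Q)\neq0$ for all $k\ge2$. Writing $g_{m}(Q)=1+Q^{m}-\frac{m(1-Q^{2})}{2}$, one has $g_{m}'(Q)=m(Q^{m-1}+Q)>0$ on $]0,1[$, $g_{m}(0)=1-\tfrac m2<0$ for $m\ge3$, $g_{m}(1)=2>0$, and $g_{m+1}(Q)-g_{m}(Q)=-(1-Q)\big(Q^{m}+\tfrac{1+Q}{2}\big)<0$; hence each $g_{m}$ $(m\ge3)$ has a unique zero $Q_{m}\in]0,1[$, these roots are pairwise distinct, $\mathcal S=\{Q_{m}:m\ge3\}$, and $\lambda_{k}(Q_{m})=g_{k}(Q_{m})\neq0$ for every $k\neq m$ (also $\lambda_{1}=\tfrac{(1+Q)^{2}}{2}>0$, $\lambda_{2}=2Q^{2}>0$). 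Thus at $Q=Q_{m}$ exactly one diagonal entry of $\mathcal L_{Q_{m}}$ vanishes, namely $\lambda_{m}$.

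It then remains to invoke Crandall--Rabinowitz at $(\Omega_{Q_{m}},0)$ for each $m\ge3$. The staircase structure with a single vanishing diagonal entry forces $\ker\mathcal L_{Q_{m}}$ to be one-dimensional, spanned by a vector $v_{m}=w^{-(m-1)}+\sum_{n=1}^{m-2}\beta_{n}w^{-n}$, and $\textnormal{Range}\,\mathcal L_{Q_{m}}$ to be closed of codimension one, namely the kernel of a bounded functional of the form $g\mapsto c_{m}(g)+\sum_{k>m}d_{k}c_{k}(g)$, where $c_{k}(g)$ is the $\textnormal{Im}(w^{k})$-coefficient of $g$. For the transversality condition one observes that $\mathcal L_{Q(\Omega)}v_{m}=\lambda_{m}(Q(\Omega))\,\textnormal{Im}(w^{m})+(\text{terms with modes }2,\dots,m-1)$, so pairing $\partial_{\Omega}\big(\mathcal L_{Q(\Omega)}v_{m}\big)\big|_{\Omega=\Omega_{Q_{m}}}$ with the cokernel functional returns $\frac{d}{d\Omega}\lambda_{m}(Q(\Omega))\big|_{\Omega_{Q_{m}}}$, and using $Q'(\Omega)=-2/Q$ this equals $mQ^{m-1}Q'(\Omega)-2m=-2m\big(1+Q^{m-2}\big)<0$; hence transversality holds. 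Crandall--Rabinowitz therefore yields, for each $Q_{m}\in\mathcal S$, a nontrivial local curve of solutions of $\widetilde G=0$ emanating from $(\Omega_{Q_{m}},0)$ — equivalently, a branch of genuine rotating patches bifurcating from $\mathcal E$ at $\mathcal E_{Q_{m}}$, which gives the announced countable family. Finally, every $\phi$ on such a branch lies in $\textnormal{id}+X_{\alpha}\subset C^{1+\alpha}$ and stays univalent with $\phi'$ non-vanishing on $\overline{\{|w|>1\}}$ (being a small perturbation of $\phi_{Q_{m}}$, for which $|\phi_{Q_{m}}'|\ge1-Q_{m}>0$ on $\T$), so its image is a $C^{1+\alpha}$ Jordan curve; as $\alpha\in]0,1[$ was arbitrary, the boundaries of these V-states are of class $C^{1+\alpha}$ for every $\alpha$, which completes the proof.
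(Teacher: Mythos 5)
Your overall strategy is the paper's: set up the contour--integral identity for rotating patches, view the ellipses as a curve of trivial solutions, and apply Crandall--Rabinowitz at the parameters where the linearization degenerates. The parts you do carry out are correct and match the paper --- the dispersion relation $\lambda_m(Q)=1+Q^m-\frac{m(1-Q^2)}{2}$, the monotonicity argument showing each $g_m$ ($m\ge 3$) has a unique root $Q_m$ with $g_k(Q_m)\neq 0$ for $k\neq m$, and the sign of $\frac{d}{d\Omega}\lambda_m$ for transversality. But the proof has a genuine gap exactly where you flag ``the main obstacle'': the structure of $\mathcal L_Q$ is asserted, not derived, and the asserted structure is not what the residue computation actually produces. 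When one carries it out (as the paper does, with the perturbation written as $h(w)=\sum_{n\ge 2}a_nw^n$ added to $\alpha_Q(w)=w+Q/w$), the operator is not a ``staircase'': it couples the $n$-th input mode to the output modes $n\pm 1$ only, with the coefficient of $e_n=\mathrm{Im}(w^n)$ equal to $-\lambda_n\,(a_{n+1}-Qa_{n-1})$. Consequently the kernel condition at $Q=Q_m$ is the two-term recurrence $a_{n+1}=Qa_{n-1}$ for $n\neq m$, whose normalizable solution is the \emph{infinite} series $v_m(w)=\sum_{n\ge0}Q^nw^{m+1+2n}=\frac{w^{m+1}}{1-Qw^2}$, not a Laurent polynomial $w^{-(m-1)}+\sum_{n=1}^{m-2}\beta_nw^{-n}$ supported on finitely many low modes. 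These describe different normal perturbations of the ellipse (the true kernel direction has normal component a pure weighted $m$-th harmonic, since $v_m(w)\,\overline{iw\alpha_Q'(w)}=-iw^m$), so the claimed eigenvector cannot be simply a change of convention; and your choice of target space omitting the $\mathrm{Im}(w)$ mode is unjustified, since the linearization does produce a nonzero $e_1$ component (in the paper's computation it is $-\frac12(1+Q)^2a_2$).

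The second problem is the claim that once the diagonal entries are known ``everything else is soft.'' It is not. Because the operator is a second-order difference operator rather than triangular, identifying $R(\mathcal L_{Q_m})$ requires solving $a_{n+1}-Qa_{n-1}=g_{n+1}/(\frac{1-Q^2}{2}n-1-Q^n)$ and then proving that the resulting preimage actually lies in $C^{1+\alpha}(\T)$; this is a genuine regularity argument (summing the series $\sum g_{n+1}w^n/(\frac{1-Q^2}{2}n-1-Q^n)$, extracting the principal part $\frac{2}{1-Q^2}\sum g_{n+1}w^n/n$ and controlling the remainders), and it is what makes the range closed of codimension one. Likewise, asserting that $\mathcal L_Q$ is Fredholm of index zero on Hölder spaces is not automatic for such infinite band matrices and must be proved, as must the $C^1$ dependence of the functional and the continuity of the mixed derivative $\partial_\Omega\partial_f$ (needed to invoke Crandall--Rabinowitz at all); the paper devotes a full section of kernel estimates for the Cauchy-type integrals to this. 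In short: the skeleton and the dispersion set are right, but the two computations that constitute the substance of the proof --- the explicit tridiagonal form of the linearization with its infinite-series kernel element, and the $C^{1+\alpha}$ regularity of preimages characterizing the range --- are missing, and the structural shortcut you propose in their place does not hold.
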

Before giving some details about the proof, we shall give first some remarks.
\begin{rema}
In a very recent paper \cite{Cor22}, Castro, C\'ordoba and G\'omez-Serrano  proved the analyticity of the V-states close to the ellipses. Our approach sounds to be  more easy but not so deep and cannot lead to the analyticity of the V-states. Notice that  what one could expect from iterating our method is to get the regularity $C^{n+\alpha}$ for each $n\in \NN$ but the proof does not guarantee a uniform existence interval with respect to the parameter $n$.
\end{rema}
\begin{rema}
In contrast to the bifurcation from the disc where we get a collection of $m$ folds, the V-states of Theorem \ref{hs10} are in general one or two-folds. For $m$ even we can show from the proof  that the V-states are symmetric with respect to the origin.
\end{rema}
 Now we shall sketch the proof of  Theorem \ref{hs10} which is mainly based  upon the bifurcation theory via  Crandall-Rabinowitz theorem. We shall look for a parametrization   of the boundary $\partial D$ of the rotating patches  as a small perturbation of a given ellipse. This parametrization takes the form $\Phi:\mathbb{T}\to \partial D$, with $\mathbb{T}$ is the unit circle and 
 $$
 \Phi(w)= w+Q\overline{w}+ \sum_{n\geq2} a_n w^n,\quad Q\in ]0,1[,\quad a_n\in \RR.
 $$
 Observe that when all the coefficients $a_n$ vanish then this parametrization corresponds to an ellipse, where $Q=\frac{a-b}{a+b}, $ with $a$ and $b$ being the major axis and the minor axis, respectively. As we shall see in the next section, the function $\Phi$ satisfies the nonlinear equation 
 \begin{equation*}
G(\Omega, \Phi(w))\triangleq \textnormal{Im}\Bigg\{ \bigg(2\Omega \,\overline{\Phi(w)}+\fint_{\mathbb{T}}\frac{\overline{\Phi(\xi)}-\overline{\Phi(w)}}{\Phi(\xi)-\Phi(w)}\,\Phi^\prime(\xi)d\xi\bigg)\,w\,\Phi^\prime(w)\Bigg\}=0,\quad \forall w\in \mathbb{T}.
\end{equation*}
 Setting $\alpha_Q(w)=w+Q\overline{w}$ then we retrieve the Kirchhoff solutions, meaning that,
 $$
 G(\small{\frac{1-Q^2}{4}}, \alpha_Q(w))=0,\quad \forall w\in \mathbb{T}.
 $$
 Now we introduce the function 
 \begin{equation*}
F(Q,f(w))=\textnormal{Im}\Big\{ G\big({\scriptstyle{\frac{1-Q^2}{4}}}, \alpha_Q(w)+f(w)\big)\Big\}.
\end{equation*}
then by this transformation the ellipses lead to a family of  trivial solutions: $F(Q,0)=0, \forall Q\in ]0,1[$. Therefore it is legitimate at this stage  to look for  non trivial solutions by using the bifurcation techniques in the spirit of Burbea's work \cite{B}. As we shall see, the computations of the linearized \mbox{operator $\mathcal{L}_Q\triangleq \partial_f F(Q,0)$} are a little bit more involved that the radial case but they can still be done in an explicit way. We shall see from this part that the  dispersion set $\mathcal{S}$ introduced in \mbox{Theorem \ref{hs10}} corresponds in fact  to the values of $Q$ such that the kernel of the operator  $\mathcal{L}_Q$ is one -dimensional. We shall also check that all the assumptions of Crandall-Rabinowitz theorem are satisfied and therefore the proof of the main result will follow immediately.

{\bf {Notation.}}
We need to fix some notation that will  be frequently used along this paper.
 We denote by C any positive constant that may change from line to line.
 We denote by $\mathbb{D}$ the unit disc and its boundary, the unit circle, is denoted by  $\mathbb{T}$. 
  Let $f:\mathbb{T}\to \CC$ be a continuous function, we define  its  mean value by,
$$
\fint_{\mathbb{T}} f(\tau)d\tau\triangleq \frac{1}{2i\pi}\int_{\mathbb{T}}  f(\tau)d\tau,
$$
where $d\tau$ stands for the complex integration.

  Let $X$ and $Y$ be two normed spaces. We denote by $\mathcal{L}(X,Y)$ the space of  all continuous linear maps $T: X\to Y$ endowed with its usual strong topology. 
We shall denote by $N(T)$ and $R(T)$  the kernel and the range of $T$, respectively. Finally, if $F$ is a subspace of $Y$, then $Y/ F$ denotes the quotient space.

\section{Formulation of the problem}
Following \cite{HMV2, H-F-M-V} one can see that the boundary of any smooth V-states $\chi_{D_t}$, with $D_t=e^{it\Omega} D$ is subject to the equation

\begin{equation}\label{rotsq1}
\textnormal{Re}\Bigg\{ \bigg(2\Omega \,\overline{z}+\frac{1}{2\pi i}\int_{\partial D_0}\frac{\overline{\zeta}-\overline{z}}{\zeta-z}d\zeta\bigg)\,z^\prime\Bigg\}=0,\quad \forall z\in \partial D.
\end{equation}

Recall that a curve $\gamma$ of the complex plane $\CC$ is said a regular Jordan curve if it admits a parametrization $\Phi :\mathbb{T}\to \gamma$ which  is simple  and of class $C^1$ such \mbox{that  $\Phi^\prime(w)\neq0, \quad\forall w\in \mathbb{T}$}. Note that in this case the curve $\gamma$ encloses a simply connected domain.
Now to solve the equation \eqref{rotsq1} we shall restrict ourselves to  domains whose boundaries are parametrized by a regular Jordan curve $\Phi:\mathbb{T}\to \CC$. A tangent vector to the boundary at the point $\Phi(w)$  is determined by $z^\prime=iw \Phi^\prime(w)$ and therefore \eqref{rotsq1} becomes

\begin{equation}\label{rotsq12}
\textnormal{Im}\Bigg\{ \bigg(2\Omega \,\overline{\Phi(w)}+\fint_{\mathbb{T}}\frac{\overline{\Phi(\xi)}-\overline{\Phi(w)}}{\Phi(\xi)-\Phi(w)}\,\Phi^\prime(\xi)d\xi\bigg)\,w\,\Phi^\prime(w)\Bigg\}=0,\quad \forall w\in \mathbb{T}.
\end{equation}
We shall define the object $G$ by
\begin{equation}\label{Eqzz1}
{G}(\Omega, \Phi(w))\triangleq \bigg(2\Omega \,\overline{\Phi(w)}+\fint_{\mathbb{T}}\frac{\overline{\Phi(\xi)}-\overline{\Phi(w)}}{\Phi(\xi)-\Phi(w)}\,\Phi^\prime(\xi)d\xi\bigg)\,w\,\Phi^\prime(w).
\end{equation}
It is easily seen   that the equation \eqref{rotsq12} is invariant by rotation and dilation.
Moreover, one can deduce   from this formulation Kirchhoff's result which states that an ellipse of the semi-axes $a$ and $b$ rotates with the angular velocity $\Omega=\frac{ab}{(a+b)^2}.$ Indeed, 
note that in this case the ellipse may be   parametrized  by the  conformal parametrization, 
$$
\Phi(w)=\frac{a+b}{2}\Big(w+\frac{Q}{w}\Big),\quad Q=\frac{a-b}{a+b}\cdot
$$
In the sequel we shall use the notation
$$
\alpha_Q(w)\triangleq w+\frac{Q}{w}, \quad w\in \mathbb{T}. 
$$
By  straightforward computations we get
$$
\overline{\alpha_Q(w)}\, w\,\alpha_Q^\prime(w)=1-Q^2+Q(w^2-\overline{w}^2).
$$
Using residue theorem and taking   $r>1$ we get
\begin{eqnarray*}
\fint_{\mathbb{T}}\frac{\overline{\alpha_Q(\xi)}-\overline{\alpha_Q(w)}}{\alpha_Q(\xi)-\alpha_Q(w)}\,\alpha_Q^\prime(\xi)d\xi&=&Q\fint_{r\mathbb{T}}\frac{\xi}{\alpha_Q(\xi)-\alpha_Q(w)}\alpha_Q^\prime(\xi)d\xi-\overline{\alpha_Q(w)}\\
&=&Q\alpha_Q(w)-\alpha_Q(\overline{w})\\
&=&\frac{Q^2-1}{w}\cdot
\end{eqnarray*}
It follows that
$$
\big(\frac{2}{a+b}\big)^2G(\Omega, \alpha_Q(w))=2\Omega\,Q(w^2-\overline{w}^2)+(Q^2-1)\Big(1-2\Omega-{Q}\overline{w}^2\Big).
$$
Thus 
$$
\big(\frac{2}{a+b}\big)^2\textnormal{Im}\{G(\Omega, \alpha_Q(w))\}=Q\big(4\Omega+Q^2-1\big)\textnormal{Im}(w^2)
$$
and consequently \eqref{rotsq12} is satisfied provided that 
\begin{equation*}
\Omega=\frac{1-Q^2}{4}=\frac{ab}{(a+b)^2}\cdot
\end{equation*}
This can be written in the form
\begin{equation}\label{ellip-sol}
\textnormal{Im }\big\{G\big({\scriptstyle{\frac{1-Q^2}{4}}}, \alpha_Q(w)\big)\big\}=0, \quad \forall \,w\in \mathbb{T}.
\end{equation}
Now we shall introduce the function
\begin{equation}\label{intro1}
F(Q,f(w))=\textnormal{Im}\Big\{ G\big({\scriptstyle{\frac{1-Q^2}{4}}}, \alpha_Q(w)+f(w)\big)\Big\}.
\end{equation}
From the preceding discussion we readily get
\begin{equation}\label{intro2}
F(Q,0)=0,\quad\forall Q\in (0,1).
\end{equation}
To prove Theorem \ref{hs10} we need to show the existence of nontrivial solutions of  the equation defining the V-states :
$$
  F(Q,f(w))=0,\quad\forall\, w\in \mathbb{T}.
$$
It will be done using the bifurcation theory through Crandall-Rabinowitz theorem \cite{CR}. For the completeness of the paper we recall this basic theorem and it will  referred to as sometimes by C-R theorem.
\begin{Theo}\label{CR-th} Let $X, Y$ be two Banach spaces, $V$ a neighborhood of $0$ in $X$ and let 
$
F : \RR \times V \to Y
$
with the following  properties:
\begin{enumerate}
\item $F (\lambda , 0) = 0$ for any $\lambda\in \RR$.
\item $F$ is $C^1$ and $F_{\lambda x}$ exists and are continuous.
\item $N(\mathcal{L}_0)$ and $Y/R(\mathcal{L}_0)$ are one-dimensional. 
\item {\it Transversality assumption}: $F_{\lambda x}(0, 0)x_0 \not\in R(\mathcal{L}_0)$, where
$$
N(\mathcal{L}_0) = span\{x_0\}, \quad \mathcal{L}_0\triangleq \partial_x F(0,0).
$$
\end{enumerate}
If $Z$ is any complement of $N(\mathcal{L}_0)$ in $X$, then there is a neighborhood $U$ of $(0,0)$ in $\RR \times X$, an interval $(-a,a)$ and continuous functions $\varphi: (-a,a) \to \RR$, $\psi: (-a,a) \to Z$ such that $\varphi(0) = 0$, $\psi(0) = 0$ and
$$
F^{-1}(0)\cap U=\Big\{\big(\varphi(\varepsilon), \varepsilon x_0+\xi\psi(\varepsilon)\big)\,;\,\vert \varepsilon\vert<a\Big\}\cup\Big\{(\lambda,0)\,;\, (\lambda,0)\in U\Big\}.
$$
\end{Theo}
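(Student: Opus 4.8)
The plan is to prove the theorem by the classical Lyapunov--Schmidt reduction, turning the infinite-dimensional equation $F(\lambda,x)=0$ into a single scalar equation whose structure is governed by hypotheses (3) and (4), and then to resolve that scalar equation by the implicit function theorem. Since $N(\mathcal{L}_0)=\textnormal{span}\{x_0\}$ is one-dimensional it is closed and admits a topological complement, which is the space $Z$ of the statement, so $X=N(\mathcal{L}_0)\oplus Z$. On the target side, $Y/R(\mathcal{L}_0)$ being one-dimensional forces $R(\mathcal{L}_0)$ to be closed of codimension one, so $Y=R(\mathcal{L}_0)\oplus W$ with $\dim W=1$; let $P$ and $Q=\textnormal{Id}-P$ be the associated continuous projections onto $R(\mathcal{L}_0)$ and $W$. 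The elementary fact I would record first is that $\mathcal{L}_0|_Z:Z\to R(\mathcal{L}_0)$ is a bounded bijection, injective because $Z\cap N(\mathcal{L}_0)=\{0\}$ and surjective because $\mathcal{L}_0(X)=\mathcal{L}_0(Z)$, hence an isomorphism by the open mapping theorem.

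Writing $x=s\,x_0+z$ with $s\in\RR$ and $z\in Z$, the equation $F(\lambda,x)=0$ splits into $PF(\lambda,sx_0+z)=0$ and $QF(\lambda,sx_0+z)=0$. I would first solve the former for $z$: the map $(\lambda,s,z)\mapsto PF(\lambda,sx_0+z)$ vanishes at the origin, is $C^1$ by (2), and its $z$-derivative there equals $P\mathcal{L}_0|_Z=\mathcal{L}_0|_Z$, an isomorphism of $Z$ onto $R(\mathcal{L}_0)$. The implicit function theorem then yields a $C^1$ map $z=\phi(\lambda,s)$ near $(0,0)$ solving it uniquely, with $\phi(0,0)=0$. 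Because $F(\lambda,0)=0$ gives $PF(\lambda,0)=0$, uniqueness forces $\phi(\lambda,0)=0$ for all $\lambda$ near $0$; differentiating in $s$ at the origin and using $\mathcal{L}_0x_0=0$ together with $Z\cap N(\mathcal{L}_0)=\{0\}$ gives $\partial_s\phi(0,0)=0$. Substituting reduces the whole problem to the scalar bifurcation equation $\Psi(\lambda,s)\triangleq QF(\lambda,sx_0+\phi(\lambda,s))=0$ valued in $W\cong\RR$.

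The core of the proof is the analysis of $\Psi$. Since $F(\lambda,0)=0$ and $\phi(\lambda,0)=0$, one has $\Psi(\lambda,0)=0$ for every $\lambda$, so Hadamard's lemma factors $\Psi(\lambda,s)=s\,\widetilde\Psi(\lambda,s)$ with $\widetilde\Psi(\lambda,s)=\int_0^1\partial_s\Psi(\lambda,ts)\,dt$; the trivial solutions are those with $s=0$, the nontrivial ones those with $\widetilde\Psi=0$. A short computation using $\mathcal{L}_0x_0=0$ and the fact that $Q$ annihilates $R(\mathcal{L}_0)$ gives $\widetilde\Psi(0,0)=\partial_s\Psi(0,0)=Q\mathcal{L}_0\big(x_0+\partial_s\phi(0,0)\big)=0$, so the origin lies on the zero set. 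The main obstacle is the transversality computation: from $\partial_s\Psi(\lambda,0)=Q\,\partial_xF(\lambda,0)\big(x_0+\partial_s\phi(\lambda,0)\big)$ I would differentiate in $\lambda$ at $0$, using the continuity of the mixed derivative $F_{\lambda x}$ guaranteed by (2), the identity $Q\mathcal{L}_0=0$, and the vanishing $\partial_s\phi(0,0)=0$, to obtain $\partial_\lambda\widetilde\Psi(0,0)=Q\,F_{\lambda x}(0,0)x_0$. Hypothesis (4) reads exactly $F_{\lambda x}(0,0)x_0\notin R(\mathcal{L}_0)$, that is $QF_{\lambda x}(0,0)x_0\neq0$, so $\partial_\lambda\widetilde\Psi(0,0)\neq0$.

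Finally I would apply the implicit function theorem once more: from $\widetilde\Psi(0,0)=0$ and $\partial_\lambda\widetilde\Psi(0,0)\neq0$ there exists a continuous function $\lambda=\varphi(\varepsilon)$ with $\varphi(0)=0$ solving $\widetilde\Psi(\varphi(\varepsilon),\varepsilon)=0$ and capturing all zeros of $\widetilde\Psi$ near the origin. Gathering the two families, the local zero set of $F$ is the trivial branch $\{(\lambda,0)\}$ together with the curve $\varepsilon\mapsto\big(\varphi(\varepsilon),\,\varepsilon x_0+\phi(\varphi(\varepsilon),\varepsilon)\big)$, and writing $\psi(\varepsilon)$ for the $Z$-valued remainder, which tends to $0$ as $\varepsilon\to0$ because $\partial_s\phi(0,0)=0$, reproduces the stated decomposition with $\varphi(0)=\psi(0)=0$. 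I expect the remaining work to be bookkeeping: tracking enough regularity to differentiate $\phi$ and to interchange the mixed derivatives of $F$, and checking that the two cancellations $\partial_s\phi(0,0)=0$ and $Q\mathcal{L}_0=0$ are precisely what collapse the transversality coefficient to $QF_{\lambda x}(0,0)x_0$.
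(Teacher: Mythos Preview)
The paper does not prove this theorem: it is stated as background (``For the completeness of the paper we recall this basic theorem'') and attributed to Crandall--Rabinowitz \cite{CR}, with no proof given. There is therefore nothing in the paper to compare your argument against.

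That said, your proposal is the standard Lyapunov--Schmidt proof and is essentially correct. One small point of bookkeeping: to match the form $\varepsilon x_0+\varepsilon\psi(\varepsilon)$ in the conclusion (the ``$\xi$'' in the displayed formula is evidently a typo for $\varepsilon$), you should make explicit that $\phi(\varphi(\varepsilon),\varepsilon)$ carries a factor of $\varepsilon$, which follows from $\phi(\lambda,0)=0$ via $\phi(\varphi(\varepsilon),\varepsilon)=\varepsilon\int_0^1\partial_s\phi(\varphi(\varepsilon),t\varepsilon)\,dt$; then $\psi(\varepsilon)$ is this integral, and $\psi(0)=\partial_s\phi(0,0)=0$ as you noted. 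Also, the claim that $R(\mathcal{L}_0)$ is closed because $Y/R(\mathcal{L}_0)$ is one-dimensional is not automatic from codimension alone; in the Crandall--Rabinowitz framework closedness is part of the standing hypothesis (or follows from Fredholmness), so you should either assume it or flag it.
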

Now we shall give a precise statement of Theorem \ref{hs10}. For this purpose we should fix the spaces $X$ and $Y$ used in C-R theorem. They are given by,
\begin{equation}\label{First1}
X=\Big\{ h\in C^{1+\alpha}(\mathbb{T}), h(w)=\sum_{n\geq2}{a_n}{w^n},\quad a_n\in \mathbb{R}\Big\}
\end{equation}
and
\begin{equation}\label{First2}
Y=\Big\{g\in C^{\alpha}(\mathbb{T}),\, g(w)=\sum_{n\geq 1}g_n\,e_n, g_n\in \RR,\, w\in\mathbb{T}\Big\}, \quad e_n(w) \triangleq \hbox{Im} (w^n).
\end{equation}

\begin{Theo}\label{hs1}
Consider the family of ellipses $\mathcal{E}: Q\in (0,1)\mapsto \mathcal{E}_Q$ given by the parametrization
$$
\mathcal{E}_Q=\Big\{w+\frac{Q}{w}, w\in \T\Big\}.
$$
Let $$
\mathcal{S}\triangleq \Big\{Q\in]0,1[, \,\exists\,\,\,  m\geq 3,\,\,  1+Q^m-\frac{1-Q^2}{2} m=0\Big\}.
$$
Then for each $Q=Q_m	\in \mathcal{S}$ there exists a nontrivial curve of  rotating vortex patches bifurcating from the curve $\mathcal{E}$ at the ellipse $\mathcal{E}_Q$. Moreover the boundary of these V-states are $C^{1+\alpha}$.

More precisely, let $Z_m$ any complement of the vector $v_m=\frac{w^{m+1}}{1-Qw^2}$ in the space $X.$  Then there exist $a
> 0$ and continuous functions  $Q : (-a,a)\to
\mathbf{R}$, $\psi: (-a,a)\to Z_m$ satisfying
$Q(0)= Q_m$, $\psi(0)=0$, such that the bifurcating curve at this point is described by,
$$
F(Q(\varepsilon), \varepsilon \frac{w^{m+1}}{1-Qw^2}+\varepsilon \psi(\varepsilon))=0.
$$ 
In particular the boundary of the V-states rotating  is described by
$$
\gamma_\varepsilon:\T\to\CC ,\quad \gamma_\varepsilon(w)=w+\frac{Q(\varepsilon)}{w}+\varepsilon \frac{w^{m+1}}{1-Qw^2}+\varepsilon \psi(\varepsilon).
$$
\end{Theo}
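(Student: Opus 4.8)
The plan is to apply the Crandall-Rabinowitz theorem (Theorem \ref{CR-th}) to the map $F$ of \eqref{intro1}, after the harmless translation $\lambda=Q-Q_m$ that moves the bifurcation point to the origin. A preliminary observation fixes the set $\mathcal S$: writing $\phi_m(Q)\triangleq 1+Q^m-\tfrac{1-Q^2}{2}m$, one has $\phi_m'(Q)=mQ^{m-1}+mQ>0$ on $(0,1)$, while $\phi_m(0)=1-\tfrac m2<0$ for $m\geq3$ and $\phi_m(1)=2>0$; hence for every $m\geq3$ there is a unique and simple root $Q_m\in(0,1)$, and $\mathcal S=\{Q_m:m\geq3\}$ is infinite (indeed $Q_m\to1$). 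The Banach spaces to use are $X,Y$ of \eqref{First1}--\eqref{First2}, and property (1) of C-R is precisely \eqref{intro2}.

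The first task is to check that $F\colon(0,1)\times V\to Y$, with $V$ a small ball of $X$, is well defined, is $C^1$, and has continuous mixed derivative $F_{Qf}$ (property (2)). Well-definedness has two ingredients: for $f\in V$ the curve $w\mapsto\alpha_Q(w)+f(w)$ must remain a regular Jordan curve so that $G$ in \eqref{Eqzz1} makes sense (immediate for $\|f\|_{C^{1+\alpha}}$ small, since $\alpha_Q'\neq0$ on $\mathbb T$), and $\textnormal{Im}\,G\big(\tfrac{1-Q^2}4,\alpha_Q+f\big)$ must lie in $Y$, i.e.\ be a $\textnormal{Im}(w^n)$-series with vanishing mean; the latter follows from the reflection symmetry of the construction (real Fourier coefficients). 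The regularity statements I would deduce from the classical theory of Cauchy-type singular operators on H\"older classes: the map $\Phi\mapsto\big(f\mapsto\fint_{\mathbb T}\frac{\overline{\Phi(\xi)}-\overline{\Phi(w)}}{\Phi(\xi)-\Phi(w)}\Phi'(\xi)\,d\xi\big)$ is smooth in the $C^{1+\alpha}$ parametrization $\Phi$ (as long as $\Phi$ stays bi-Lipschitz) and maps $C^{1+\alpha}(\mathbb T)$ into itself, while $\alpha_Q$ and $\Omega=\tfrac{1-Q^2}4$ depend polynomially on $Q$. This part is technical but standard.

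The computational heart is the explicit linearized operator $\mathcal L_Q\triangleq\partial_f F(Q,0)$. I would obtain it by differentiating $G\big(\tfrac{1-Q^2}4,\alpha_Q+t\,w^n\big)$ at $t=0$ for the basis elements $w^n$, $n\geq2$, expanding the product structure of \eqref{Eqzz1}: on $\mathbb T$ the variation of the factor $2\Omega\,\overline{\Phi(w)}$ is $2\Omega\,w^{-n}$, that of the outer factor $w\Phi'(w)$ is $n\,w^{n}$, and the variation of the singular integral is handled by the quotient rule $\partial_t(\overline g/g)=\partial_t\overline g/g-\overline g\,\partial_t g/g^{2}$ with $g=\alpha_Q(\xi)-\alpha_Q(w)$. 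Substituting $\overline w=1/w$ and factoring $\alpha_Q(\xi)-\alpha_Q(w)=\frac{(\xi-w)(\xi w-Q)}{\xi w}$, each of the finitely many resulting integrals is evaluated by residues (deforming the contour to $r\mathbb T$, $r>1$) exactly as in the verification of the Kirchhoff solution in \S2. Collecting terms, I expect $\mathcal L_Q$ to be an invertible constant-coefficient Fourier multiplier plus a smoothing correction, with a band structure in the bases of $X$ and $Y$ (its origin being the two frequencies $w$ and $Q w^{-1}$ present in $\alpha_Q(w)$ on $\mathbb T$). This is the step I expect to be the main obstacle, and it is where the dispersion relation $\phi_m(Q)=0$ will surface.

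It then remains to read off the kernel, the range and the transversality condition. From the band structure of $\mathcal L_Q$ the equation $\mathcal L_Q v=0$ for $v\in X$ reduces to a recursion on the Taylor coefficients of $v$; analyzing it one finds a nontrivial solution precisely when $\phi_m(Q)=0$ for some $m\geq3$, and for $Q=Q_m$ the solution is (a multiple of) $v_m=\frac{w^{m+1}}{1-Qw^2}$. That $Q_m$ cannot satisfy a second relation $\phi_{m'}(Q)=0$, $m'>m$, follows because this would force $\frac{1+Q_m^{m}}{1+Q_m^{m'}}=\frac{m}{m'}$, impossible since the left side exceeds $1$ while the right side is less than $1$; hence $N(\mathcal L_{Q_m})=\textnormal{span}\{v_m\}$ is one-dimensional. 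The Fourier-multiplier-plus-smoothing structure also makes $\mathcal L_{Q_m}$ Fredholm of index $0$, so $Y/R(\mathcal L_{Q_m})$ is one-dimensional as well, with an explicit generator; this completes property (3). For the transversality assumption (property (4)), differentiating in $Q$ the equivalence ``$v_m\in N(\mathcal L_Q)\iff\phi_m(Q)=0$'' shows that the component of $\partial_Q\mathcal L_{Q_m}v_m$ along the missing mode of $Y$ is proportional to $\phi_m'(Q_m)$, which is nonzero since $Q_m$ is a simple root; therefore $\partial_Q\mathcal L_{Q_m}v_m\notin R(\mathcal L_{Q_m})$. Theorem \ref{CR-th} now applies with $\lambda\leftrightarrow Q-Q_m$, $x_0\leftrightarrow v_m$ and $Z_m$ any complement of $\textnormal{span}\{v_m\}$ in $X$, yielding $a>0$ and continuous $Q\colon(-a,a)\to\RR$, $\psi\colon(-a,a)\to Z_m$ with $Q(0)=Q_m$, $\psi(0)=0$, and $F\big(Q(\varepsilon),\varepsilon v_m+\varepsilon\psi(\varepsilon)\big)=0$; the boundary of the bifurcated V-state is $\gamma_\varepsilon(w)=w+\frac{Q(\varepsilon)}{w}+\varepsilon\frac{w^{m+1}}{1-Qw^2}+\varepsilon\psi(\varepsilon)$, with $C^{1+\alpha}$ regularity built into $X$. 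Letting $m$ range over $\{3,4,5,\dots\}$ produces the stated countable collection of branches.
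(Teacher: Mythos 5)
Your proposal follows essentially the same route as the paper: Crandall--Rabinowitz applied to the functional $F$ of \eqref{intro1} on the spaces \eqref{First1}--\eqref{First2}, regularity of $F$ via H\"older estimates for Cauchy-type singular integrals, explicit computation of $\mathcal{L}_Q$ by residues using the factorization $\alpha_Q(\xi)-\alpha_Q(w)=\frac{(\xi-w)(\xi w-Q)}{\xi w}$, the band recursion $a_{n+1}=Qa_{n-1}$ off the resonant frequency yielding the kernel $v_m=\frac{w^{m+1}}{1-Qw^2}$, and transversality from the simplicity of the root $Q_m$ of $\phi_m$ (your observation that the $e_m$-component of $\partial_Q\mathcal{L}_{Q_m}v_m$ is $\pm\phi_m'(Q_m)=\pm m(Q_m+Q_m^{m-1})$ is exactly the paper's computation, and your argument that no two relations $\phi_m(Q)=\phi_{m'}(Q)=0$ can coexist is a clean variant of the paper's monotonicity-in-$m$ argument). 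The one place where you assert more than you prove is the claim that the ``Fourier-multiplier-plus-smoothing structure'' makes $\mathcal{L}_{Q_m}$ Fredholm of index $0$: the operator loses one derivative (the symbol $\frac{1-Q^2}{2}n-1-Q^n$ grows linearly), so it is not a compact perturbation of anything obviously invertible on a fixed space, and the codimension-one statement for $R(\mathcal{L}_{Q_m})\subset Y$ requires actually solving the recursion and verifying that the formal preimage $h=\sum a_nw^n$ of a given $g\in C^\alpha$ lands in $C^{1+\alpha}$; this is where the paper spends its effort (the splitting of $G$ into the pieces $G_1,\dots,G_5$ and the inversion of multiplication by $1-Qw^2$). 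That step is fillable within your framework, but as written it is the missing piece of property (3).
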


The proof consists in checking all the assumptions of Theorem \ref{CR-th}. This will be done in details in the next sections.

\section{Regularity of the functional}
This section is devoted to the study  of the regularity assumptions stated in  C-R theorem.  We shall study  the nonlinear  functional $F$  defining the V-states  already seen in  \eqref{intro1}. It is given through the functional $G$ as follows,
\begin{equation*}
{G}(\Omega, \Phi(w))= \bigg(2\Omega \,\overline{\Phi(w)}+\fint_{\mathbb{T}}\frac{\overline{\Phi(\xi)}-\overline{\Phi(w)}}{\Phi(\xi)-\Phi(w)}\,\Phi^\prime(\xi)d\xi\bigg)\,w\,\Phi^\prime(w)
\end{equation*}
and 
$$
F(Q,f(w))=\textnormal{Im}\Big\{ G\big({\scriptstyle{\frac{1-Q^2}{4}}}, \alpha_Q(w)+f(w)\big)\Big\}.
$$
For  $r\in (0,1)$ we denote by $B_r$  the open ball of $X$ (this  space was  introduced in  \eqref{First1})  with center  $0$ and radius $r$,
$$
B_r=\Big\{f\in X,\quad \Vert f\Vert_{C^{1+\alpha}}< r\Big\}.
$$
We shall make use at several stages of the following lemma, for more details see \cite[p. 419]{MOV}.
\begin{lemma}\label{kernel}
Let $T$ be a singular operator defined by
$$
T\phi(w)=\int_{\T}K(\xi,w)\phi(\xi)d\xi.
$$
Assume that the kernel of the operator $T$  satisfies
\begin{enumerate}
\item
$K$ is measurable on $\T \times \T$ and
$$|K(\xi,w)| \le C_0, \quad\forall\,  \xi, w \in \T.$$

\item
For each $\xi\in \T$, $w\mapsto K(\xi,w)$ is differentiable in
$\T\backslash\{\xi\}$ and
$$
\left |\partial_w K(\xi,w) \right| \le \frac{C_0}{|w-\xi|}, \quad
\forall\,  w \in \T\backslash\{\xi\}.
$$
\end{enumerate}
Then for every $0 < \alpha < 1$
 $$
 \|T\phi\|_{\alpha}\le C_0\|\phi\|_{L^\infty}.
 $$
\end{lemma}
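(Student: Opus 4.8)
The plan is to estimate the two pieces of the Hölder norm of $T\phi$, namely the sup norm and the Hölder seminorm, separately. For the sup norm, hypothesis (1) gives immediately
$$
|T\phi(w)|\le \int_\T |K(\xi,w)|\,|\phi(\xi)|\,|d\xi|\le 2\pi\, C_0\,\|\phi\|_{L^\infty},
$$
so $\|T\phi\|_{L^\infty}\le C\,C_0\|\phi\|_{L^\infty}$. The real content is the seminorm bound: for $w_1,w_2\in\T$ we must show
$$
|T\phi(w_1)-T\phi(w_2)|\le C\,C_0\,\|\phi\|_{L^\infty}\,|w_1-w_2|^\alpha.
$$

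The standard device here is to split the domain of integration according to distance from the singularities. Set $\delta=|w_1-w_2|$ and write, with $B=\{\xi\in\T:\ |\xi-w_1|<2\delta\}$ (a boundary arc of length comparable to $\delta$),
$$
T\phi(w_1)-T\phi(w_2)=\int_{B}\big(K(\xi,w_1)-K(\xi,w_2)\big)\phi(\xi)\,d\xi+\int_{\T\setminus B}\big(K(\xi,w_1)-K(\xi,w_2)\big)\phi(\xi)\,d\xi.
$$
On the near part $B$ I would not try to exploit cancellation: just bound each kernel by $C_0$ via (1), so the integral is $\le 2 C_0\|\phi\|_{L^\infty}\,|B|\le C\,C_0\|\phi\|_{L^\infty}\,\delta\le C\,C_0\|\phi\|_{L^\infty}\,\delta^\alpha$ since $\delta\le\operatorname{diam}\T$ and $\alpha<1$. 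On the far part $\T\setminus B$, I would apply the mean value theorem along the short arc from $w_2$ to $w_1$: for $\xi\notin B$ every point $w$ on that arc satisfies $|w-\xi|\ge c\,|w_1-\xi|$, so hypothesis (2) gives $|K(\xi,w_1)-K(\xi,w_2)|\le C_0\,\delta\,\sup_{w}|w-\xi|^{-1}\le C\,C_0\,\delta\,|w_1-\xi|^{-1}$. Hence
$$
\left|\int_{\T\setminus B}\big(K(\xi,w_1)-K(\xi,w_2)\big)\phi(\xi)\,d\xi\right|\le C\,C_0\,\|\phi\|_{L^\infty}\,\delta\int_{|\xi-w_1|\ge 2\delta}\frac{|d\xi|}{|\xi-w_1|}\le C\,C_0\,\|\phi\|_{L^\infty}\,\delta\,\log\frac{1}{\delta}.
$$
Finally $\delta\log(1/\delta)\le C_\alpha\,\delta^\alpha$ for $\delta$ in a bounded range, which absorbs the logarithm and finishes the seminorm bound; combining with the sup-norm estimate yields $\|T\phi\|_\alpha\le C\,C_0\|\phi\|_{L^\infty}$.

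The only mild subtlety — and the step I would be most careful about — is the geometric comparison $|w-\xi|\ge c\,|w_1-\xi|$ for $\xi\notin B$ and $w$ on the sub-arc between $w_1$ and $w_2$, together with making sure the arc-length measure on $\T$ is comparable to chordal distance so the logarithmic integral is genuinely $\int_{2\delta}^{\pi} dr/r$. Both are elementary facts about the unit circle; once they are in place the rest is the routine near/far splitting above. Note that the implicit constant depends on $\alpha$ (through the bound $\delta\log(1/\delta)\le C_\alpha\delta^\alpha$), which is consistent with the statement since $\alpha\in(0,1)$ is fixed.
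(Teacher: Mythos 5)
Your argument is correct. The paper itself gives no proof of this lemma --- it simply cites \cite[p.~419]{MOV} --- and your near/far decomposition of $\T$ at scale $|w_1-w_2|$, with the $L^\infty$ bound on the kernel near the singularity, the mean value theorem together with the gradient bound away from it, and the absorption of the resulting $\delta\log(1/\delta)$ into $\delta^\alpha$, is exactly the standard proof found in that reference. The geometric point you flag ($|w-\xi|\ge c\,|w_1-\xi|$ for $w$ on the short arc and $\xi$ outside the $2\delta$-arc, plus comparability of arc length and chord) does hold and is all that is needed; the only cosmetic discrepancy is that the constant in the conclusion necessarily depends on $\alpha$, which you correctly note.
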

The main result of this section reads as follows.
\begin{prop}
Let  $\varepsilon\in (0,1)$ and  $r_\varepsilon=\frac{1-\varepsilon}{2}$, then the following holds true.
\begin{enumerate}
\item The function $F:(0,\varepsilon)\times B_{r_\varepsilon}\to Y$ is of class $C^1.$
\item The partial derivative $ \partial_{Q}\partial_fF:(0,\varepsilon)\times B_{r_\varepsilon}\to Y$ is continuous.
\end{enumerate}
\end{prop}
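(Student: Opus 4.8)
The plan is to establish smoothness of $F$ by treating its two structural pieces separately: the "algebraic" part coming from the boundary terms $2\Omega\,\overline{\Phi(w)}\,w\,\Phi'(w)$, and the "singular integral" part
$$
H(Q,f)(w)=\fint_{\mathbb{T}}\frac{\overline{\Phi(\xi)}-\overline{\Phi(w)}}{\Phi(\xi)-\Phi(w)}\,\Phi'(\xi)\,d\xi,\qquad \Phi=\alpha_Q+f .
$$
First I would record the elementary observation that $\alpha_Q$ is injective on $\mathbb{T}$ with $\alpha_Q'$ nonvanishing for $Q\in(0,1)$, and more quantitatively that there is a constant $c(\varepsilon)>0$ such that for $Q\in(0,\varepsilon)$ and $f\in B_{r_\varepsilon}$ one has
$$
\big|\Phi(\xi)-\Phi(w)\big|\ge c(\varepsilon)\,|\xi-w|,\qquad \xi,w\in\mathbb{T},
$$
and $\big|w\,\Phi'(w)\big|$ is bounded above and below. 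This uniform non-degeneracy is exactly what makes the map well-defined into $C^\alpha$ and is the workhorse for all later estimates; the choice $r_\varepsilon=\frac{1-\varepsilon}{2}$ is there precisely to keep $\|f\|_{C^{1+\alpha}}$ small enough that $\Phi$ remains a regular Jordan curve bounded away from degeneracy.

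The algebraic part is easy: $(Q,f)\mapsto 2\Omega(Q)\,\overline{\alpha_Q(w)+f(w)}\,w\,(\alpha_Q+f)'(w)$ is, after taking $\mathrm{Im}$ and using that conjugation on $\mathbb{T}$ is $w\mapsto 1/w$, a polynomial-type expression in $Q$ with coefficients that are bounded bilinear (hence smooth) maps in $f$ on $C^{1+\alpha}(\mathbb{T})$, using that $C^\alpha$ is a Banach algebra; one checks it lands in the target space $Y$ of imaginary parts of Fourier series by inspecting frequencies. For the singular part I would write, for fixed $w$,
$$
\frac{\overline{\Phi(\xi)}-\overline{\Phi(w)}}{\Phi(\xi)-\Phi(w)}
=\frac{\overline{\Phi(\xi)}-\overline{\Phi(w)}}{\xi-w}\cdot\frac{\xi-w}{\Phi(\xi)-\Phi(w)},
$$
so the kernel of the integral operator factors as a $C^\alpha$ function of $\xi$ times a term that, thanks to the lower bound above, is itself smooth in $(Q,f)$ with values in $C^\alpha$. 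Then Lemma~\ref{kernel} applies: one verifies the two hypotheses (boundedness of the kernel $K(\xi,w)$ and the $|w-\xi|^{-1}$ bound on $\partial_w K$) uniformly for $(Q,f)\in(0,\varepsilon)\times B_{r_\varepsilon}$, which gives $H(Q,f)\in C^\alpha$ and continuity of $H$.

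For the $C^1$ statement I would compute the Gateaux derivatives $\partial_f F$ and $\partial_Q F$ formally by differentiating under the integral sign — the derivative $\partial_f H(Q,f)h$ produces again a singular integral whose kernel is built from $\Phi$, $\Phi'$, $h$, $h'$ and the difference quotients, all controlled by Lemma~\ref{kernel} — and then upgrade Gateaux to Fréchet by showing these derivative expressions are themselves continuous in $(Q,f)$ in operator norm $\mathcal{L}(X,Y)$, again via the uniform kernel bounds. Part (2), continuity of $\partial_Q\partial_f F$, follows the same template: $\partial_Q\partial_f F$ is a finite sum of algebraic terms plus singular integrals whose kernels depend smoothly on $Q$ through $\alpha_Q$ and through the uniformly bounded factor $(\Phi(\xi)-\Phi(w))^{-1}$, so Lemma~\ref{kernel} plus the Banach-algebra property of $C^\alpha$ and dominated convergence give continuity. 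The main obstacle is purely bookkeeping rather than conceptual: one must differentiate the quotient $\big(\overline{\Phi(\xi)}-\overline{\Phi(w)}\big)/\big(\Phi(\xi)-\Phi(w)\big)$ in $w$ and in the parameters while keeping every resulting kernel within the scope of Lemma~\ref{kernel} uniformly over the ball $B_{r_\varepsilon}$ — in particular handling the cancellation that makes $\partial_w K$ only mildly singular — and to verify carefully that after taking imaginary parts the output has the Fourier structure defining $Y$.
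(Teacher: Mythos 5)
Your plan follows essentially the same route as the paper: the uniform lower bound $|\Phi_f(\xi)-\Phi_f(w)|\ge \frac{1-\varepsilon}{2}|\xi-w|$ justified by the choice of $r_\varepsilon$, the split of $G$ into a polynomial part (handled by the Banach-algebra property of $C^\alpha$) and a singular-integral part $G_2$ controlled by Lemma~\ref{kernel}, explicit G\^ateaux differentiation under the integral sign, and the upgrade to Fr\'echet differentiability via Lipschitz continuity of the derivative in operator norm, with the mixed derivative $\partial_Q\partial_f F$ treated by the same kernel estimates. The proposal is correct in outline and matches the paper's argument.
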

\begin{proof}
${\bf{(1)}}$ To get this result it suffices to prove that $\partial_QF, \partial_fF:]0,\varepsilon[\times B_{r_\varepsilon}\to Y$ exist and are continuous. We shall first compute $\partial_fF(Q,f).$ This will be done  by showing first the existence of the G\^ateaux derivative and second its continuity in the strong topology. Before dealing with this problem we should first show that the functional $F$ is well-defined. For this purpose it suffices to show that the functional $G$ sends $X$ into $C^\alpha(\mathbb{T})$ and the Fourier coefficients of $G((1-Q^2)/4,\alpha_Q+f))$ are real when f belongs to $X$. As to the second claim we follow the Arxiv version  of the \mbox{paper \cite{HMV}} and for the sake of simplicity we shall  skip the details and sketch  just  the basic ideas of the proof.  First, we write
\begin{eqnarray}
\nonumber G((1-Q^2)/4,\alpha_Q(w)+f(w))&=&\frac{1-Q^2}{2}  \big[1+Q w^2+ w\overline{f(w)}\big]\big[1-Q\overline{w}^2+f^\prime(w)\big]\\
\nonumber&+&w\Phi_f^\prime(w)\fint_{\mathbb{T}}\frac{ \overline{\Phi_f(\xi)}-\overline{\Phi_f(w)}}{\Phi_f(\xi)-\Phi_f(w)}\Phi_f^\prime(\xi) d\xi\\
&\triangleq&G_1(Q,f(w))+w\Phi_f^\prime(w)\, G_2(Q,f)
\end{eqnarray}
with the notation $\Phi_f=\alpha_Q+f.$  It is clear that $G_1$ is polynomial in the variable $Q$ and bilinear on $f$ and $f^\prime$. Therefore using  the algebra structure of $C^\alpha(\mathbb{T})$ one gets
$$
\|G_1(Q,f))\|_{\alpha}\le C\|\Phi_f\|_{\alpha}\|\Phi_f\|_{1+\alpha}.
$$
This implies in particular that $G_1:(0,1)\times X\to Y$ is of class $C^\infty$. Now we shall focus on the second part $G_2$. Fix $Q\in (0,\varepsilon)$ and put $r_\varepsilon=\frac{1-\varepsilon}{2},$ then for $f\in B_{r_\varepsilon}$ we get
\begin{equation}\label{coer}
\frac{1-\varepsilon}{2}|w-\xi|\le\big|\alpha_Q(w)+f(w)-\alpha_Q(\xi)-f(\xi)\big|,\quad \forall\, \xi,w\in \mathbb{T}.
\end{equation}
Indeed,
\begin{eqnarray*}
\big|\alpha_Q(w)-\alpha_Q(\xi)\big|&=&|w-\xi|\,|1-\frac{Q}{w\xi}|\\
&\geq&(1-Q)|\xi-w|\\
&\geq&(1-\varepsilon)|\xi-w|.
\end{eqnarray*}
We combine this with the mean-value theorem applied to $f$ which is holomorphic inside the unit disc
\begin{eqnarray*}
|f(w)-f(\xi)|&\le& \|f^\prime\|_{L^\infty}|w-\xi|\\
&\le& \frac{1-\varepsilon}{2}|w-\xi|.
\end{eqnarray*}
Now using Lemma \ref{kernel} we get that $G_2(Q,f)\in C^\alpha(\mathbb{T}).$ This concludes the fact that $F$ is well-defined. Next,
we shall prove that for $f\in X$ with $\|f\|_{1+\alpha}<r_\varepsilon$ the G\^ateaux derivative $\partial_fG_2$ exists and is continuous. Straightforward computations show that this derivative is given by: for $h\in X,$
\begin{equation}\label{Non1}
\partial_f G_2(Q,f)h(w)=\sum_{j=1}^3I_j(Q,f)h(w),
\end{equation}
with 
\begin{eqnarray*}
I_1(Q,f)h(w)&=&\fint_{\mathbb{T}}\frac{{\Phi_f}( \overline\xi)-{\Phi_f}( \overline{w})}{\Phi_f(\xi)-\Phi_f(w)}h^\prime(\xi) d\xi\\
&=&\fint_{\mathbb{T}}{K}_1(\xi,w)h^\prime(\xi)d\xi,
\end{eqnarray*}
\begin{eqnarray*}
I_2(Q,f)h(w)&=&\fint_{\mathbb{T}}\frac{ h(\overline{\xi})-h(\overline{w})}{\Phi_f(\xi)-\Phi_f(w)}\Phi_f^\prime(\xi) d\xi\\
&=&\fint_{\mathbb{T}}{K}_2(\xi,w)\Phi_f^\prime(\xi)d\xi
\end{eqnarray*}
and
\begin{eqnarray*}
I_3(Q,f)h(w)&=&-\fint_{\mathbb{T}}\frac{ \big({\Phi_f}(\overline\xi)-{\Phi_f}(\overline{w})\big)(h(\xi)-h(w))}{\big(\Phi_f(\xi)-\Phi_f(w)\big)^2}\Phi_f^\prime(\xi) d\xi\\
&=&\fint_{\mathbb{T}}{K}_3(\xi,w)\Phi_f^\prime(\xi)d\xi.
\end{eqnarray*}
Notice that we have used the fact that the Fourier coefficients of $\Phi_f$ are real and therefore $\overline{\Phi_f(w)}=\Phi_f(\overline{w})$. It is easy to check  according to \eqref{coer} that
$$
|K_1(\xi,w)|=1,\quad |\partial_wK(\xi,w)|\le C_0|w-\xi|^{-1}
$$
and thus  we deduce from Lemma \ref{kernel} that 
$$
\|I_1(Q,f)h\|_\alpha\lesssim \|h^\prime\|_{L^\infty}\lesssim \|h\|_{1+\alpha}.
$$
For the second term $I_2$ we have the following estimates for the kernel
\begin{eqnarray*}
|K_2(\xi,w)|&=&\frac{|h(\overline{\xi})-h(\overline{w})|}{|\Phi_f(\xi)-{\Phi_f}(w)|}\\
&\le&\frac{2}{1-\varepsilon}\|h^\prime\|_{L^\infty}\\
|\partial_wK_2(\xi,w)|&\le& C_0\|h^\prime\|_{L^\infty}|w-\xi|^{-1}.
\end{eqnarray*}
Once again from Lemma \ref{kernel} one gets,
$$
\|I_2(Q,f)h\|_\alpha\lesssim \|h^\prime\|_{L^\infty}\|\Phi_f^\prime\|_{L^\infty}\lesssim \|h\|_{1+\alpha}.
$$
The last term can be estimated similarly to the previous one and we get
$$
\|I_3(Q,f)h\|_\alpha\lesssim \|h^\prime\|_{L^\infty}\|\Phi_f^\prime\|_{L^\infty}\lesssim \|h\|_{1+\alpha}.
$$
Putting together the preceding estimates we get 
$$
\|\partial_fG_2(Q,f)h\|_{\alpha}\le C\|h\|_{1+\alpha}.
$$
This shows the existence of G\^ateaux derivative and now we intend to  prove the continuity of the map $f\mapsto\partial_fG_2(Q,f)$ from $X$ to $\mathcal{L}(X,Y)$. This is a consequence of the following estimate that we shall prove now: for  $f,g\in B_{r_\varepsilon}$, one has
\begin{equation}\label{Frech}
\|\partial_fG_2(Q,f)h-\partial_fG_2(Q,g)h\|_\alpha\le C\|f-g\|_{1+\alpha}\| h\|_{1+\alpha}.
\end{equation}
First we write
\begin{eqnarray*}
I_1(Q,f)h(w)-I_1(Q,g)h(w)&=&\fint_{\mathbb{T}}\frac{ \big(\Phi_f(\overline{\xi})-\Phi_f(\overline{w})\big)\big((g-f)(\xi)-(g-f)(w)\big)}{(\Phi_f(\xi)-\Phi_f(w))(\Phi_g(\xi)-\Phi_g(w))}h^\prime(\xi) d\xi\\
&+&\fint_{\mathbb{T}}\frac{ (f-g)(\overline{\xi})-(f-g)(\overline{w})}{\Phi_g(\xi)-\Phi_g(w)}h^\prime(\xi) d\xi\\
&\triangleq&\fint_{\mathbb{T}}{K}_4(\xi,w)h^\prime(\xi)d\xi.
\end{eqnarray*}
By the mean value theorem we may  check that
$$
|{K}_4(\xi,w)|\le C\|f^\prime-g^\prime\|_{L^\infty}
$$
and
$$
|\partial_wK_4(\xi,w)|\le C\|f^\prime-g^\prime\|_{L^\infty}\|\xi-w|^{-1}.
$$
Thus we obtain by using Lemma \ref{kernel}
\begin{eqnarray*}
\|I_1(Q,f)h-I_1(Q,g)h\|_{\alpha}&\le &C_0\|f^\prime-g^\prime\|_{L^\infty}\|h^\prime\|_{L^\infty}\\
&\le&C\|f-g\|_{1+\alpha}\|h\|_{1+\alpha}.
\end{eqnarray*}
To estimate $I_2(Q,f)h-I_2(Q,g)h$ we shall  use the identity
$$
\frac{\Phi_f^\prime(\xi)}{\Phi_f(\xi)-\Phi_f(w)}-\frac{\Phi_g^\prime(\xi)}{\Phi_g(\xi)-\Phi_g(w)}=\frac{f^\prime(\xi)-g^\prime(\xi)}{\Phi_f(\xi)-\Phi_f(w)}-\Phi_g^\prime(\xi)\frac{(f-g)(\xi)-(f-g)(w)\big)}{(\Phi_f(\xi)-\Phi_f(w))(\Phi_g(\xi)-\Phi_g(w))}
$$
and thus
\begin{eqnarray*}
I_2(Q,f)h(w)-I_2(Q,g)h(w)&=&\fint_{\mathbb{T}}\frac{h(\overline{\xi})-h(\overline{w})}{\Phi_f(\xi)-{\Phi_f}(w)}(f^\prime(\xi)-g^\prime(\xi))d\xi\\
&-&\fint_{\mathbb{T}}\frac{ \big(h(\overline{\xi})-h(\overline{w})\big)\big((f-g)(\xi)-(f-g)(w)\big)}{(\Phi_f(\xi)-\Phi_f(w))(\Phi_g(\xi)-\Phi_g(w))} \Phi_g^\prime(\xi)d\xi\\
&\triangleq&\fint_{\mathbb{T}}{K}_5(\xi,w)(f^\prime(\xi)-g^\prime(\xi))d\xi-\fint_{\mathbb{T}}{K}_6(\xi,w)\Phi_g^\prime(\xi)d\xi.
\end{eqnarray*}
The kernels can be estimated as follows
$$
|{K}_5(\xi,w)|\le C\|h^\prime\|_{L^\infty},|\partial_w{K}_5(\xi,w)|\le C\|h^\prime\|_{L^\infty}|w-\xi|^{-1}
$$
and
$$
|{K}_6(\xi,w)|\le C_0\|h^\prime\|_{L^\infty}\|f^\prime-g^\prime \|_{L^\infty} ,|\partial_w{K}_6(\xi,w)|\le C_0\|h^\prime\|_{L^\infty}\|f^\prime-g^\prime \|_{L^\infty}|w-\xi|^{-1}.
$$
So Lemma \ref{kernel} implies that
\begin{eqnarray*}
\|I_2(Q,f)h-I_2(Q,g)h\|_{\alpha}&\le &C_0\|f^\prime-g^\prime\|_{L^\infty}\|h^\prime\|_{L^\infty}\\
&\le&C_0\|f-g\|_{1+\alpha}\|h\|_{1+\alpha}.
\end{eqnarray*}
It remains to check the continuity of $I_3$. We write
\begin{eqnarray*}
I_3(Q,f)h(w)&-&I_3(Q,g)h(w)=-\fint_{\mathbb{T}}\frac{ \big({\Phi_f}(\overline\xi)-\Phi_f(\overline w)\big)(h(\xi)-h(w))}{\big(\Phi_f(\xi)-\Phi_f(w)\big)^2}(f-g)^\prime(\xi) d\xi\\
&-&\fint_{\mathbb{T}}\frac{ \big[(f-g)(\overline\xi)-(f-g)(\overline w)\big](h(\xi)-h(w))}{\big(\Phi_f(\xi)-\Phi_f(w)\big)^2}\Phi_g^\prime(\xi)d\xi\\
&+&\fint_{\mathbb{T}}K_7(\xi,w)\Phi_g^\prime(\xi)d\xi\\
&\triangleq&\sum_{j=1}^3I_3^j(f,g)(h)(w)
\end{eqnarray*}
with
\begin{eqnarray*}
K_7(\xi,w)&=&\frac{ \big(\overline{\Phi_g}(\xi)-\overline{\Phi_g}(w)\big)\Big(\{\Phi_g(\xi)-\Phi_g(w)\}+\{\Phi_f(\xi)-\Phi_f(w)\}\Big)}{\big(\Phi_f(\xi)-\Phi_f(w)\big)^2\big(\Phi_g(\xi)-\Phi_g(w)\big)^2}\\
&\times&\big(h(\xi)-h(w)\big)\big[(f-g)(\xi)-(f-g)w)\big].
\end{eqnarray*}
The first term can be written in  the form
$$
I_3^1(f,g)(w)=\fint_{\mathbb{T}}K_8(\xi,w)(f-g)^\prime(\xi)d\xi
$$
where the kernel $K_8$ satisfies
$$
|K_8(\xi,w)|\le C \|h^\prime\|_{L^\infty} \quad\hbox{and}\quad |\partial_w K_8(\xi,w)|\le C \|h^\prime\|_{L^\infty} |\xi-w|^{-1}.
$$
This  yields in view of Lemma \ref{kernel}
\begin{eqnarray*}
\|I_3^1(f,g)\|_{\alpha}&\le& C\|h^\prime\|_{L^\infty}\|f^\prime-g^\prime\|_{L^\infty}\\
&\le& C\|h\|_{1+\alpha}\|f-g\|_{1+\alpha}.
\end{eqnarray*}
The second term can written under the form
$$
I_3^1(f,g)(w)=\fint_{\mathbb{T}}K_9(\xi,w)\Phi_g^\prime(\xi)d\xi
$$
and the kernel $K_9$ satisfies
$$
|K_9(\xi,w)|\le C_0\|f^\prime-g^\prime\|_{L^\infty} \|h^\prime\|_{L^\infty} \quad\hbox{and}\quad |\partial_w K_8(\xi,w)|\le C_0 \|f^\prime-g^\prime\|_{L^\infty}\|h^\prime\|_{L^\infty} \frac{1}{|\xi-w|}
$$
which yields in view of Lemma \ref{kernel}
\begin{eqnarray*}
\|I_3^2(f,g)\|_{\alpha}&\le& C\|h^\prime\|_{L^\infty}\|f^\prime-g^\prime\|_{L^\infty}\|\Phi_g^\prime\|_{L^\infty}\\
&\le& C\|h\|_{1+\alpha}\|f-g\|_{1+\alpha}.
\end{eqnarray*}
For the third term we can check that the kernel $K_7$ satisfies
$$
|K_7(\xi,w)|\le C\|f^\prime-g^\prime\|_{L^\infty} \|h^\prime\|_{L^\infty} \quad\hbox{and}\quad |\partial_w K_7(\xi,w)|\le C \|f^\prime-g^\prime\|_{L^\infty}\|h^\prime\|_{L^\infty} \frac{1}{|\xi-w|}
$$
which gives according to Lemma \ref{kernel}
\begin{eqnarray*}
\|I_3^3(f,g)\|_{\alpha}&\le& C\|h^\prime\|_{L^\infty}\|f^\prime-g^\prime\|_{L^\infty}\|\Phi_g^\prime\|_{L^\infty}\\
&\le& C\|h\|_{1+\alpha}\|f-g\|_{1+\alpha}.
\end{eqnarray*}
Putting together the preceding estimates we get
\begin{eqnarray*}
\|I_3(Q,f)h-I_3(Q,g)h\|_{\alpha}&\le&C\|h\|_{1+\alpha}\|f-g\|_{1+\alpha}.
\end{eqnarray*}
This achieves the proof of \eqref{Frech} and therefore the G\^ateaux derivative is Lipschitz and thus it is  continuous on the variable $f$. Therefore we conclude at this stage that the Fréchet derivative exists and coincides with Gâteaux derivative. See \cite{D} for more information.

We shall now study the regularity of $G_2$ with respect to $Q$. This reduces to studying the regularity of $Q\mapsto G_2(Q,f)$ given by
\begin{eqnarray*}
G_2(Q,f)=\fint_{\mathbb{T}}\frac{ \{\alpha_Q(\overline{\xi}))-\alpha_Q(\overline{w})\}+f(\bar\xi)-f(\bar{w})}{\{\alpha_Q({\xi}))-\alpha_Q({w})\}+f(\xi)-f({w})}\big(\alpha_Q^\prime(\xi)+f^\prime(\xi)\big) d\xi.
\end{eqnarray*}
Easy computations yields
\begin{eqnarray*}
\partial_Q G_2(Q,f)&=&\fint_{\mathbb{T}}\frac{ \xi-w}{\Phi_f(\xi)-\Phi_f(w)}\big(\alpha_Q^\prime(\xi)+f^\prime(\xi)\big) d\xi\\
&-&\fint_{\mathbb{T}}\frac{ \Phi_f(\overline\xi)-\Phi_f(\overline{w})}{\Phi_f(\xi)-\Phi_f(w)}\frac{1}{\xi^2} d\xi\\\
&-&\fint_{\mathbb{T}}\frac{ \big( \Phi_f(\overline\xi)-\Phi_f(\overline{w})\big)(\overline{\xi-w})}{\Big( \Phi_f(\xi)-\Phi_f({w})\Big)^2}\big(\alpha_Q^\prime(\xi)+f^\prime(\xi)\big) d\xi.
\end{eqnarray*}
As before, using Lemma \ref{kernel} we get for $(Q,f)\in (0,\varepsilon)\times B_{r_\varepsilon}$
$$
\|\partial_Q G_2(Q,f)\|_{\alpha}\le C_\varepsilon.
$$
Reproducing the same analysis we get for any $k\in \NN$
$$
\|\partial_Q^kG_2(Q,f)\|_{\alpha}\le C_{k,\varepsilon}.
$$
Similarly we  obtain that  $\partial_Q G_2:  (0,\varepsilon)\times B_{r_\varepsilon}\to C^\alpha(\mathbb{T})$ exists and is  continuous. Using that $\partial_fG_2$ is continuous we deduce that $G_2: (0,\varepsilon)\times B_{r_\varepsilon}\to C^\alpha(\mathbb{T})$ is $C^1$ and it follows that 
$F: (0,\varepsilon)\times B_{r_\varepsilon}\to Y$ is also $C^1$. By induction one can show that $G_2:  (0,\varepsilon)\times B_{r_\varepsilon}\to C^\alpha(\mathbb{T})$ is in fact  $C^\infty$.
\vspace{0,3cm}

$\bf(2)$ We shall check that  $\partial_Q\partial_fG_2: (0,\varepsilon)\times B_{r_\varepsilon}\to C^\alpha(\mathbb{T})$ is continuous. According to \eqref{Non1} we obtain
$$
\partial_Q\partial_fG_2(Q,f)h(w)=\sum_{j=1}^3\partial_QI_j(Q,f)h(w).
$$
$\bullet$ Estimate of $\partial_QI_1(Q,f)h.$ From  its expression we write
$$
\partial_QI_1(Q,f)h(w)=\fint_{\mathbb{T}}\partial_QK_1(\xi,w)h^\prime(\xi)d\xi.
$$
Straightforward computations yield
\begin{eqnarray*}
\partial_QI_1(Q,f)h(w)&=&\fint_{\mathbb{T}}\frac{ \xi-w}{\Phi_f(\xi)-\Phi_f(w) }h^\prime(\xi)d\xi\\
&-&\fint_{\mathbb{T}}\frac{ \big( \Phi_f(\overline\xi)-\Phi_f(\overline{w})\big)(\overline{\xi-w})}{\Big( \Phi_f(\xi)-\Phi_f({w})\Big)^2}h^\prime(\xi) d\xi.
\end{eqnarray*}
Using Lemma \ref{kernel} we get
\begin{eqnarray*}
\|\partial_QI_1(Q,f)h\|_{\alpha}&\le& C\|h^\prime\|_{L^\infty}\\
&\le&  C\|h\|_{1+\alpha}.
\end{eqnarray*}
$\bullet$ Estimate of $\partial_QI_2(Q,f)h.$ One may write

\begin{eqnarray*}
\partial_QI_2(Q,f)h(w)&=&-\fint_{\mathbb{T}}\frac{ h(\overline\xi)-h(\overline w)}{\Phi_f(\xi)-\Phi_f(w)}\frac{1}{\xi^2} d\xi\\
&-&\fint_{\mathbb{T}}\frac{ (\bar\xi-\bar{w})\big(h(\overline\xi)-h(\overline w)\big)}{\Big(\Phi_f(\xi)-\Phi_f(w)\Big)^2}\Phi_f^\prime(\xi) d\xi.
\end{eqnarray*}
Using again Lemma \ref{kernel} we find
\begin{eqnarray*}
\|\partial_QI_2(Q,f)h\|_{\alpha}&\le& C\|h^\prime\|_{L^\infty}\\
&\le&  C\|h\|_{1+\alpha}.
\end{eqnarray*}

$\bullet$ Estimate of $\partial_QI_3(Q,f)h.$ We have
\begin{eqnarray*}
\partial_QI_3(Q,f)h(w)&=&\fint_{\mathbb{T}}\frac{ \big({\Phi_f}(\overline\xi)-{\Phi_f}(\overline{w})\big)(h(\xi)-h(w))}{\big(\Phi_f(\xi)-\Phi_f(w)\big)^2}\frac{1}{\xi^2}d\xi\\
&-&\fint_{\mathbb{T}}\frac{ \big(\xi-w\big)(h(\xi)-h(w))}{\big(\Phi_f(\xi)-\Phi_f(w)\big)^2}\Phi_f^\prime(\xi)d\xi\\
&+&2\fint_{\mathbb{T}}\frac{\big(\overline\xi-\overline{w}\big) \big({\Phi_f}(\overline\xi)-{\Phi_f}(\overline{w})\big)(h(\xi)-h(w))}{\big(\Phi_f(\xi)-\Phi_f(w)\big)^3} \Phi_f^\prime(\xi)d\xi\\
&=&\fint_{\mathbb{T}}K_9(\xi,w)\frac{1}{\xi^2}d\xi+\fint_{\mathbb{T}}K_{10}(\xi,w)\Phi_f^\prime(\xi)d\xi.
\end{eqnarray*}
We can check that
$$
|K_9(\xi,w)|+|K_{10}(\xi,w)|\le C \|h^\prime\|_{L^\infty}\quad\hbox{and}\quad |\partial_wK_9(\xi,w)|+|\partial_wK_{10}(\xi,w)|\le C \|h^\prime\|_{L^\infty}\frac{1}{|w-\xi|}
$$
and therefore we get by Lemma \ref{kernel},
$$
\|\partial_QI_3(Q,f)h\|_{\alpha}\le C\|h\|_{1+\alpha}.
$$
Finally we obtain
$$
\|\partial_QI_j(Q,f)h\|_{\alpha}\le C\|h\|_{1+\alpha}.
$$
Reproducing the same analysis we get for any $k\in \NN$
$$
\|\partial_Q^kI_j(Q,f)h\|_{\alpha}\le C(k)\|h\|_{1+\alpha}
$$
and consequently
\begin{equation}\label{T1}
\|\partial_Q^k\partial_fG_2(Q,f)h\|_{\alpha}\le C\|h\|_{1+\alpha}.
\end{equation}
On the other hand the same analysis used for proving \eqref{Frech} shows that 
$$
\forall Q\in (0,\varepsilon), f, g\in B_{r_\varepsilon},\quad \|\partial_QI_j(Q,f)h-\partial_QI_j(Q,g)h\|_{\alpha}\le C \|h\|_{1+\alpha}\|f-g\|_{1+\alpha}
$$
and thus
\begin{equation}\label{T2}
\big{\|}\partial_Q\partial_f G_2(Q,f)h-\partial_Q\partial_f G_2(Q,g)h\big{\|}_{\alpha}\le C\|h\|_{1+\alpha}\|f-g\|_{1+\alpha}.
\end{equation}
Combining \eqref{T1} for the case  $k=1$ with \eqref{T2} we conclude that $\partial_Q\partial_fG_2:(0,\varepsilon)\times B_{r_\varepsilon}\to C^\alpha(\mathbb{T})$ is continuous. This achieves the proof of the proposition.
\end{proof}

\section{Study of the linearized equation}
The main goal of this section is to study some spectral properties of the linearized operator of the functional $f\in X\mapsto F(Q,f)$ in a neighborhood of   zero. This operator is defined by
$$
\mathcal{L}_Qh(w)\triangleq\frac{d}{dt}F(Q, t\,h(w))_{|t=0},\quad  h\in X,
$$
where $F$ was defined in \eqref{intro1} and the space $X$ in \eqref{First1}.

Now we introduce the following set
$$
\mathcal{S}\triangleq \Big\{Q\in]0,1[, \,\exists\,  m\geq 3,  1+Q^m-\frac{1-Q^2}{2} m=0\Big\}.
$$
For given  $m\geq 3$ the function $f_m:Q\in(0,1)\mapsto 1+Q^m-\frac{1-Q^2}{2} m$ is strictly nondecreasing and satisfies
$$f_m(0)=1-\frac{m}{2}<0,\quad  f_m(1)=2.
$$
Consequently, there is only one $Q_m\in (0,1) $ with $f_m(Q_m)=0.$ This allows to construct a function $m\mapsto Q_m$. As the map $n\mapsto f_n(Q)$ is strictly decreasing then one can readily prove that the sequence $m\mapsto Q_m$ is strictly increasing. Moreover, it is not difficult to prove the asymptotic behavior
$$
Q_m\approx 1-\frac{\alpha}{m}, \quad m\to\infty
$$ 
where $\alpha$ is the unique solution of
$$
1+e^{-\alpha}-\alpha=0.
$$
Now we shall establish the following properties for $\mathcal{L}_Q$ which yields immediately  to Theorem \ref{hs10} and Theorem \ref{hs1} according to Crandall-Rabinowitz theorem.
\begin{prop}\label{prop-spec11}
The following assertions hold true.
\begin{enumerate}
\item Let $h(w)=\sum_{n\geq2} a_n w^n\in X$, then
$$
\mathcal{L}_Qh=\sum_{n\geq1} g_{n+1}e_n; \quad e_n(w)=\hbox{Im}(w^n),
$$
with
\begin{eqnarray*}
g_2&=&-\frac12(1+Q)^2 a_2,\\
 g_3&=&-2Q^2a_3,\\
g_{n+1}&=& \Big(\frac{1-Q^2}{2}n-1-Q^n\Big)\big(a_{n+1}-Q a_{n-1}\big), \quad \forall n\geq 3.
\end{eqnarray*}
\item The kernel of $\mathcal{L}_Q$ is nontrivial if and only if $Q=Q_m\in \mathcal{S}$ and it is a one-dimensional vector space generated by
$$
v_m(w)=\frac{w^{m+1}}{1-Qw^2}\cdot
$$ 
 \item The range of $\mathcal{L}_Q$ is of co-dimension one in $Y$ and it is  given by
 $$
 R(\mathcal{L}_Q)=\Big\{ g\in C^\alpha(\mathbb{T}), g= \sum_{n\geq1\\\atop n\neq m}g_{n+1}e_n, \quad g_n \in \mathbb{R} \Big\}.
 $$
 \item Transversality assumption: for any $Q=Q_m\in\mathcal{S},$
 $$
 \partial_Q\mathcal{L}_Q v_m\notin R(\mathcal{L}_Q).
 $$
\end{enumerate}
\end{prop}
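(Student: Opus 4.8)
The plan is to prove the four assertions in order; the explicit computation of $\mathcal{L}_Q$ in (1) is the computational heart, and (2)--(4) then follow by essentially elementary algebra together with one Fredholm/compactness argument. Since $F(Q,f)=\mathrm{Im}\,G\big(\tfrac{1-Q^2}{4},\alpha_Q+f\big)$, I would differentiate the splitting $G=G_1+w\Phi_f'G_2$ at $f=0$, obtaining
$$\mathcal{L}_Qh=\mathrm{Im}\Big\{\partial_fG_1(Q,0)h+\big(wh'(w)\big)G_2(Q,0)+w\alpha_Q'(w)\,\partial_fG_2(Q,0)h\Big\},$$
where $\partial_fG_1(Q,0)h$ is bilinear and immediate, $G_2(Q,0)=\tfrac{Q^2-1}{w}$ was already computed in Section~2, and $\partial_fG_2(Q,0)h=\sum_{j=1}^3I_j(Q,0)h$ with the $I_j$ of \eqref{Non1}. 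Using $\alpha_Q(\xi)-\alpha_Q(w)=\tfrac{(\xi-w)(\xi w-Q)}{\xi w}$ and the reality of the Fourier coefficients (so that $\overline{\alpha_Q(w)}=\alpha_Q(1/w)$ and $\overline{h(w)}=h(1/w)$ on $\mathbb{T}$), each $I_j(Q,0)h$ becomes a Cauchy-type integral with kernel rational in $\xi$, which I would evaluate by residues; the relevant poles inside $\mathbb{T}$ are $\xi=w$ (removable for $I_2,I_3$ since the numerators vanish there), the second preimage $\xi=Q\overline w$, and $\xi=0$ (produced by $h(1/\xi)$ and $\alpha_Q'(\xi)$). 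Writing $h=\sum_{n\ge2}a_nw^n$, collecting, and taking imaginary parts gives the coefficient of $e_n$, hence the stated $g_2,g_3,g_{n+1}$. It is worth noting that all three formulas unify as
$$g_{n+1}=\delta_n(Q)\,c_n,\qquad \delta_n(Q):=\tfrac{1-Q^2}{2}n-1-Q^n,\qquad \sum_{n\ge1}c_nw^n=(1-Qw^2)\tfrac{h(w)}{w}$$
with the convention $a_0=a_1=0$, since $\delta_1(Q)=-\tfrac12(1+Q)^2$ and $\delta_2(Q)=-2Q^2$. This exhibits a factorization $\mathcal{L}_Q=\mathcal{M}\circ\mathcal{T}$, where $\mathcal{T}h=(1-Qw^2)h/w$ is an isomorphism of $X$ onto $\widetilde X:=\{\sum_{n\ge1}c_nw^n\in C^{1+\alpha},\ c_n\in\mathbb{R}\}$ and $\mathcal{M}(\sum c_nw^n)=\sum_{n\ge1}\delta_n(Q)c_n\,e_n$.

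For (2) I would use the monotonicity recorded before the proposition: each $f_n=-\delta_n$ is strictly increasing with a unique root $Q_n$, and $n\mapsto f_n(Q)$ is strictly decreasing, so for fixed $Q\in(0,1)$ the relation $\delta_n(Q)=0$ holds for at most one $n$, namely $n=m$ exactly when $Q=Q_m\in\mathcal{S}$. Then $\mathcal{L}_Qh=0$ forces $c_n=0$ for all $n\ne m$; since $c_n=a_{n+1}-Qa_{n-1}$, the cases $n=1,2$ give $a_2=a_3=0$ (here $Q\ne0$ is used), and an induction along the two parity chains yields $a_n=0$ for $n\le m$ and $a_{m+1+2j}=Q^ja_{m+1}$ for $j\ge0$, i.e. $h=a_{m+1}w^{m+1}\sum_{j\ge0}(Qw^2)^j=a_{m+1}v_m$. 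Hence $N(\mathcal{L}_Q)=\{0\}$ unless $Q=Q_m$, in which case $N(\mathcal{L}_{Q_m})=\mathbb{R}v_m$ (and $v_m\in X$ because $|Q|<1$); equivalently $N(\mathcal{M})=\mathbb{R}w^m$ and $\mathcal{T}^{-1}(w^m)=v_m$.

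For (3), from $g_{m+1}=\delta_m(Q_m)c_m=0$ we get $R(\mathcal{L}_{Q_m})\subseteq\{g\in Y:\ g_{m+1}=0\}$, i.e. the subspace of $Y$ of vanishing $e_m$-coefficient, which has codimension one. To see the inclusion is an equality it suffices to know $\mathcal{L}_{Q_m}$ is Fredholm of index $0$, and this follows from $\mathcal{M}=\tfrac{1-Q^2}{2}\Lambda+\mathcal{R}$, where $\Lambda(\sum c_nw^n)=\sum nc_n\,e_n$ is an isomorphism $\widetilde X\to Y$ (it is the map $P\mapsto\mathrm{Im}(wP')$, inverted by one integration, using boundedness of the Cauchy projection on $C^\alpha$), while $\mathcal{R}(\sum c_nw^n)=\sum(-1-Q^n)c_n\,e_n$ is compact from $\widetilde X$ to $Y$ because $C^{1+\alpha}(\mathbb{T})\hookrightarrow C^\alpha(\mathbb{T})$ compactly and the multiplier $Q^n$ is smoothing; since $\mathcal{T}$ is an isomorphism, $\mathcal{L}_{Q_m}$ is Fredholm of index $0$, so $\mathrm{codim}\,R(\mathcal{L}_{Q_m})=\dim N(\mathcal{L}_{Q_m})=1$ and equality holds. (Alternatively one may build a bounded right inverse of $\mathcal{L}_{Q_m}$ on $\{g_{m+1}=0\}$ directly, using $\delta_n(Q)\sim\tfrac{1-Q^2}{2}n$.)

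For (4), fix $v_m=w^{m+1}/(1-Q_mw^2)$. Then $(1-Qw^2)v_m/w=w^m\big(1+(Q_m-Q)\sum_{j\ge0}Q_m^jw^{2j+2}\big)$, hence $\mathcal{L}_Qv_m=\delta_m(Q)e_m+(Q_m-Q)\sum_{j\ge0}Q_m^j\delta_{m+2+2j}(Q)\,e_{m+2+2j}$, and differentiating in $Q$ at $Q=Q_m$ the coefficient of $e_m$ is exactly $\delta_m'(Q_m)=-mQ_m\big(1+Q_m^{m-2}\big)\ne0$ (this is $-f_m'(Q_m)$, nonzero precisely because $f_m$ is strictly increasing at its root). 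Since by (3) $R(\mathcal{L}_{Q_m})$ is exactly the set of $g\in Y$ with vanishing $e_m$-coefficient, I conclude $\partial_Q\mathcal{L}_Qv_m\notin R(\mathcal{L}_{Q_m})$, which is the transversality condition (any $Q$-dependence built into $v_m$ itself only adds the harmless term $\mathcal{L}_{Q_m}(\partial_Qv_m)\in R(\mathcal{L}_{Q_m})$). The main obstacle is the residue bookkeeping in (1) --- especially the $\xi=0$ contributions coming from $h(1/\xi)$ and $\alpha_Q'$ --- together with checking that the many terms collapse to the clean formulas above; once this is done, (2)--(4) are short, the only genuinely analytic point among them being the compactness/Fredholm argument of Step~3.
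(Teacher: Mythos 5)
Your proposal is correct, and parts (1), (2) and (4) follow essentially the paper's route: the linearization is evaluated by residues with exactly the poles you list ($\xi=w$, $\xi=Q\overline w$, $\xi=0$; the paper organizes the bookkeeping into five integrals $I_1,\dots,I_5$ and then four operators $L_1,\dots,L_4$, landing on the same coefficients), the kernel analysis via the monotonicity of $f_m$ and the two parity chains is identical, and your transversality coefficient $\delta_m'(Q_m)=-m(Q_m+Q_m^{m-1})\neq 0$ is literally the paper's computation. Your unified formula $g_{n+1}=\delta_n(Q)\,c_n$ and the factorization $\mathcal L_Q=\mathcal M\circ\mathcal T$ are a nice structural observation the paper leaves implicit. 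The genuine divergence is in part (3): the paper proves surjectivity onto $\mathcal Y$ by explicitly solving the recursion $a_{n+1}-Qa_{n-1}=g_{n+1}/\delta_n(Q)$ and then verifying by hand that the resulting pre-image lies in $C^{1+\alpha}(\mathbb T)$ (splitting $G=G_1+G_2$, using Cauchy--Schwarz and, implicitly, the boundedness of the Riesz projection on $C^\alpha$ to handle the tail $R_m$); you instead invoke a Fredholm argument, writing $\mathcal M=\tfrac{1-Q^2}{2}\Lambda+\mathcal R$ with $\Lambda$ an isomorphism (again via the Riesz projection) and $\mathcal R$ compact, so that $\mathcal L_{Q_m}$ has closed range with $\operatorname{codim}R(\mathcal L_{Q_m})=\dim N(\mathcal L_{Q_m})=1$, forcing the inclusion into $\{g_{m+1}=0\}$ to be an equality. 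Both arguments rest on the same analytic input; yours is shorter and more conceptual, while the paper's explicit pre-image construction is what one would iterate to push the regularity beyond $C^{1+\alpha}$, as its first remark suggests. The one thing you have not actually carried out is the residue computation in (1), which you rightly flag as the computational heart; since the method, the pole structure, and the final coefficients you state all match the paper's, this is a matter of execution rather than a gap.
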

\begin{proof}
${(\bf{1})}-{(\bf{2})}$
We shall first slightly transform the expression of $G$ seen in \eqref{Eqzz1}. We may write
$$
G(\Omega, \Phi(w))=(2\Omega-1)\overline{\Phi(w)} w\Phi^\prime(w)+ w\Phi^\prime(w)\fint_{\mathbb{T}}\frac{\overline{\Phi(\xi)} \Phi^\prime(\xi)}{\Phi(\xi)-\Phi(w)} d\xi.
$$
Notice that the last integral and all the singular integrals that will appear later in this proof are  understood as the limit from the interior in the following sense: For $f:\mathbb{T}\to \CC$ a continuous function   and $w\in \mathbb{T}$, we denote by
$$
\fint_{\mathbb{T}}\frac{f(\xi)}{\Phi(\xi)-\Phi(w)} d\xi=\lim_{z\in \mathbb{D}^\star\atop  z\mapsto w} \fint_{\mathbb{T}}\frac{f(\xi)}{\Phi(\xi)-\Phi(z)} d\xi,
$$
with $\mathbb{D}^\star$ being the interior of $\mathbb{D}.$
This definition is justified by the fact that the points in the unit disc which are located  close to the boundary $\mathbb{T}$ are sent by the map $\Phi$ inside the domain enclosed by the Jordan curve $\Phi(\mathbb{T})$, since $\Phi$ is as small perturbation of the outside conformal map of the ellipse $\alpha_Q$.
Recall also that $\Omega=\frac{1-Q^2}{4}$. Now we shall compute
$$
Lh(w)\triangleq \frac{d}{dt}G(\Omega, \alpha_Q(w)+th(w))_{|t=0}.
$$
The relation with $\mathcal{L}_Q$ is given by $\mathcal{L}_Q h=\hbox{Im } L h$. For simplicity set $\alpha:=\alpha_Q$ then performing straightforward calculations one  can check that
  $$
-L h(w)=I_1(w)-\sum_{j=2}^5 I_j(w),
 $$
with 
$$
I_1(w)=\frac{1+Q^2}{2}\Big((1+Qw^2)h^\prime(w)+(w-{Q}\overline{w})h(\overline w)\Big),
$$
$$
I_2(w)=wh^\prime(w)\fint_{\mathbb{T}}\frac{\alpha(\overline{\xi})}{\alpha(\xi)-\alpha(w)}\alpha^\prime(\xi) d\xi,
$$
$$
I_3(w)=w\alpha^\prime(w)\fint_{\mathbb{T}}\frac{\alpha^\prime(\xi)}{\alpha(\xi)-\alpha(w)} h(\overline{\xi}) d\xi,
$$
$$
I_4(w)=w\alpha^\prime(w)\fint_{\mathbb{T}}\frac{\alpha(\overline{\xi})}{\alpha(\xi)-\alpha(w)}h^\prime(\xi) d\xi
$$
and
$$
I_5(w)=-w\alpha^\prime(w)\fint_{\mathbb{T}} \alpha(\overline{\xi})\frac{h(\xi)-h(w)}{\big(\alpha(\xi)-\alpha(w)\big)^2}\alpha^\prime(\xi) d\xi.
$$
$\bullet$ {\it Computation of $I_2$}. Let $z\in \mathbb{D}$ being in a very small tubular neighborhood of $\mathbb{T}$ (such that $Q/z\in \mathbb{D}$)  then by residue theorem we get
\begin{eqnarray*}
\fint_{\mathbb{T}}\frac{\alpha(\overline{\xi})}{\alpha(\xi)-\alpha(z)}\alpha^\prime(\xi) d\xi&=&\fint_{\mathbb{T}}\frac{(1+Q\xi^2)(\xi^2-Q)}{\xi^2(\xi- z)(\xi-\frac{Q}{z})}d\xi\\
&=&\fint_{\mathbb{T}}\frac{\xi^2-Q}{\xi^2(\xi- z)(\xi-\frac{Q}{z})}d\xi+Q\fint_{\mathbb{T}}\frac{\xi^2}{(\xi- z)(\xi-\frac{Q}{z})}d\xi\\
&=& Q\fint_{\mathbb{T}}\frac{\xi^2}{(\xi- z)(\xi-\frac{Q}{z})}d\xi\\
&=&Q\Big(\frac{z^2}{z-\frac{Q}{z}}+\frac{\frac{Q^2}{z^2}}{\frac{Q}{z}-z}\Big)\\
&=&Q\big(z+\frac{Q}{z}\big).
\end{eqnarray*}
Therefore letting $z$ go to $w$ we find
\begin{equation}\label{Eq1}
I_2(w)=Qw\big(w+\frac{Q}{w}\big)h^\prime(w).
\end{equation}

$\bullet$ {\it Computation of $I_3$}.  From the explicit formula of $\alpha $ we get
\begin{eqnarray*}
\fint_{\mathbb{T}}\frac{\alpha^\prime(\xi)}{\alpha(\xi)-\alpha(z)} \overline{h}(\xi) d\xi&=&\fint_{\mathbb{T}}\frac{\xi^2-Q}{\xi(\xi-z)\big(\xi-\frac{Q}{z}\big)} \overline{h}(\xi) d\xi.
\end{eqnarray*}
Since $\overline{h}$ is analytic outside the open unit disc and is at least of order two in $\frac1\xi$ at infinity    then by  residue theorem (always with $z\in \mathbb{D}$ such that $Q/z\in \mathbb{D}$)
\begin{eqnarray*}
\fint_{\mathbb{T}}\frac{\alpha^\prime(\xi)}{\alpha(\xi)-\alpha(z)} \overline{h}(\xi) d\xi&=&\fint_{\mathbb{T}}\frac{\xi^2-Q}{\xi(\xi-z)\big(\xi-\frac{Q}{z}\big)} \overline{h}(\xi) d\xi\\
&=&0.
\end{eqnarray*}
Consequently we find
\begin{equation}\label{Eq2}
I_3(w)=0.
\end{equation}

$\bullet$ {\it Computation of $I_4$}. Let $z\in \mathbb{D}$ with $Q/z\in \mathbb{D}$, since $h^\prime$ is holomorphic inside the unit disc  then we deduce by residue theorem 
\begin{eqnarray*}
\fint_{\mathbb{T}}\frac{\overline{\alpha}(\xi)}{\alpha(\xi)-\alpha(z)}h^\prime(\xi) d\xi&=&\fint_{\mathbb{T}}\frac{1+Q\xi^2}{(\xi- z)(\xi-\frac{Q}{z})}h^\prime(\xi)d\xi\\
&=&\frac{1+Qz^2}{z-\frac{Q}{z}}h^\prime(z)+\frac{1+\frac{Q^3}{z^2}}{\frac{Q}{z}-z}h^\prime\big(\frac{Q}{z}\big).
\end{eqnarray*}
Thus we obtain
\begin{equation}\label{Eq3}
I_4(w)=(1+Q w^2) h^\prime(w)-\big(1+{Q^3}\overline{w}^2\big)h^\prime\big({Q}\overline{w}\big).
\end{equation}

$\bullet$ {\it Computation of $I_5$}. Using residue theorem as in the preceding cases we find\begin{eqnarray*}
\fint_{\mathbb{T}} \overline{\alpha}(\xi)\frac{h(\xi)-h(w)}{\big(\alpha(\xi)-\alpha(w)\big)^2}\alpha^\prime(\xi) d\xi&=& \fint_{\mathbb{T}}\frac{(1+Q\xi^2)(\xi^2-Q)}{\xi(\xi-w)^2\big(\xi-\frac{Q}{w}\big)^2} \big({h}(\xi)-h(w)\big) d\xi\\
&=&-h(w) \fint_{\mathbb{T}}\frac{(1+Q\xi^2)(\xi^2-Q)}{\xi(\xi-w)^2\big(\xi-\frac{Q}{w}\big)^2}  d\xi\\
&+& \fint_{\mathbb{T}}\frac{(1+Q\xi^2)(\xi^2-Q)}{(\xi-w)^2\big(\xi-\frac{Q}{w}\big)^2} \frac{{h}(\xi)}{\xi} d\xi\\
&=&-Qh(w)+\it{J}(w),
\end{eqnarray*}
with
$$
{\it{J}}(w)\triangleq \fint_{\mathbb{T}}\frac{(1+Q\xi^2)(\xi^2-Q)}{(\xi-w)^2\big(\xi-\frac{Q}{w}\big)^2} \frac{{h}(\xi)}{\xi} d\xi.
$$
Set
$$
F_1(\xi)=\frac{(1+Q\xi^2)(\xi^2-Q)}{\big(\xi-\frac{Q}{w}\big)^2} \frac{{h}(\xi)}{\xi}\triangleq K_1(\xi) \frac{{h}(\xi)}{\xi},
$$
and
$$F_2(\xi)=\frac{(1+Q\xi^2)(\xi^2-Q)}{\big(\xi-{w}\big)^2} \frac{{h}(\xi)}{\xi}\triangleq K_2(\xi) \frac{{h}(\xi)}{\xi}\cdot
$$
Then we get successively, 
$$
K_1^\prime(w)=\frac{2Qw^2}{w-\frac{Q}{w}}\qquad  K_2^\prime\big(\frac{Q}{w}\big)=\frac{\frac{2Q^3}{w^2}}{\frac{Q}{w}-w}
$$
which give in turn 
$$
F_1^\prime(w)=\frac{1}{w-\frac{Q}{w}}\Big\{(Qw-\frac1w)h(w)+(1+Qw^2)h^\prime(w)\Big\}$$
$$F_2^\prime(\frac{Q}{w})=\frac{1}{\frac{Q}{w}-w}\Big\{\big(\frac{Q^2}{w}-\frac{w}{Q}\big)h(\frac{Q}{w})+\big(1+\frac{Q^3}{w^2}Q\big)h^\prime\Big(\frac{Q}{w}\Big)\Big\}.
$$
Applying once again residue theorem we obtain
$$
{\it{J}}(w)=F_1^\prime(w)+F_2^\prime({Q}\overline{w}).
$$
It follows that
\begin{eqnarray}\label{Eq4}
I_5(w)=(1-Q^2)\overline{w}h(w)-(1+Qw^2)h^\prime(w)+ \big({Q^2}\overline{w}-\frac{w}{Q}\Big)h({Q}\overline{w})+\big(1+{Q^3}\overline{w}^2\big)h^\prime\big({Q}\overline{w}\big).
\end{eqnarray}
Putting together the  identities \eqref{Eq1}, \eqref{Eq2}, \eqref{Eq3} and \eqref{Eq4} one gets
$$
\sum_{j=2}^5I_j(w)={(1-Q^2)}\overline{w}h(w)+ Q(Q+w^2)h^\prime(w)+ \big({Q^2}\overline{w}-\frac{w}{Q}\Big)h({Q}\overline{w})
$$
and consequently,
\begin{eqnarray*}
-L h(w)&=&\frac{1+Q^2}{2}\Big\{(1+Qw^2)h^\prime(w)+(w-{Q}\overline{w})\overline{h}(w)\Big\}+{(Q^2-1)}{\overline w}h(w)\\
&-&Q(w^2+Q)h^\prime(w)+\big(\frac{w}{Q}-{Q^2}\overline{w}\big)h\big({Q}\overline{w}\big).
\end{eqnarray*}
This can also be written in the form
\begin{eqnarray*}
-Lh(w)&=&\Big[\frac{1-Q^2}{2}+Q\frac{Q^2-1}{2} w^2\Big]h^\prime(w)+({Q^2-1})\overline{w}h(w)\\
&+&\frac{1+Q^2}{2}\big(w-{Q}\overline{w}\big)\overline{h}(w)+\big(\frac{w}{Q}-{Q^2}\overline{w}\big)h\big({Q}\overline{w}\big)
\\
&\triangleq&L_1h(w)+L_2h(w)+{L}_3h(w)+L_4h(w).
\end{eqnarray*}
It is easy to check that
\begin{eqnarray*}
{L}_1h(w)&=&\frac{1-Q^2}{2}(1-Qw^2)\sum_{n\geq2} na_n w^{n-1}\\
&=&\frac{1-Q^2}{2}\Big({2 a_2}{w}+{3a_3}{w^2}\Big)+\sum_{n\geq 3}{c_n}{w^n},
\end{eqnarray*}
with
$$
c_n=\frac{1-Q^2}{2}\Big((n+1) a_{n+1}-Q(n-1)a_{n-1}\Big),\quad\forall  n\geq 3.
$$
The computation of ${L}_2h$ is obvious, 
\begin{eqnarray*}
{L}_2h(w)&=&({Q^2-1})\sum_{n\geq2} a_n w^{n-1}\\
&=&(Q^2-1)\Big({a_2}{w}+{a_3}{w^2}\Big)+(Q^2-1)\sum_{n\geq 3}a_{n+1}{w^n}.
\end{eqnarray*}
Therefore we get
\begin{eqnarray}\label{Eq5}
{L}_1h(w)+{L}_2h(w)&=&\frac{1-Q^2}{2}a_3{w^2}+\sum_{n\geq 3}{d_n}{w^n},
\end{eqnarray}
with
\begin{equation}\label{Eq8}
d_n=\frac{1-Q^2}{2}(n-1)\big( a_{n+1}-Qa_{n-1}\big),\quad\forall n\geq 3.
\end{equation}
As to the third term, we easily  find
\begin{eqnarray*}
{L}_3h(w)&=&\frac{1+Q^2}{2}\big(w-{Q}\overline{w}\big)\sum_{n\geq2}\frac{a_n }{w^n}\\
&=&\frac{1+Q^2}{2}\big(\frac{a_2}{w}+\frac{a_3}{w^2}\big)+\frac{1+Q^2}{2}\sum_{n\geq 3}\frac{a_{n+1}-Q a_{n-1}}{w^n}\cdot
\end{eqnarray*}
Performing the same analysis yields
\begin{eqnarray*}
{L}_4h(w)&=&\big(\frac{w}{Q}-\frac{Q^2}{w}\big)\sum_{n\geq2}\frac{a_n Q^n}{w^n}\\
&=&\sum_{n\geq2}\frac{a_n Q^{n-1}}{w^{n-1}}-Q\sum_{n\geq2}\frac{a_n Q^{n+1}}{w^{n+1}}\\
&=&\frac{Q a_2}{w}+\frac{ Q^2a_3}{w^2}+\sum_{n\geq 3}\frac{Q^n\big(a_{n+1}-Q a_{n-1}\big)}{w^n}\cdot
\end{eqnarray*}
Consequently,
\begin{eqnarray}\label{Eq6}
{L}_3h(w)+{L}_4h(w)&=&\frac{(1+Q)^2}{2}\frac{a_2}{w}+\frac{1+3Q^2}{2}\frac{a_3}{w^2}\\
\nonumber&+&\sum_{n\geq3}\frac{ \widehat{d_n}}{w^n},
\end{eqnarray}
with
\begin{eqnarray}\label{Eqz}
 \widehat{d_n}&=&\Big(\frac{1+Q^2}{2}+Q^n\Big)\big( a_{n+1}-Qa_{n-1}\big).
\end{eqnarray}
According to \eqref{Eq5} and \eqref{Eq6} we get
\begin{eqnarray*}
-\mathcal{L}_Qh(w)&=&\hbox{Im}\, {L}h(w)\\
&=&-\frac{(1+Q)^2}{2} a_2e_1(w)-2Q^2 a_3e_2(w)\\
&+&\sum_{n\geq 3}(d_n- \widehat{d_n})e_n(w).
\end{eqnarray*}
with the notation $e_n(w)=\hbox{Im}(w^n).$
It follows that 
\begin{eqnarray}\label{Eqz1}
-\nonumber\mathcal{L}_Qh(w)&=&-\frac{1}{2}(1+Q)^2 a_2\, e_1(w)-2Q^2a_3\, e_2(w)\\
&+&\sum_{n\geq 3}(d_n- \widehat{d_n})\, e_n(w).
\end{eqnarray}
Combining \eqref{Eq8} and \eqref{Eqz} we obtain
\begin{equation}\label{hm1}
d_n- \widehat{d_n}=\Big(\frac{1-Q^2}{2}n-1-Q^n\Big)\big(a_{n+1}-Q a_{n-1}\big).
\end{equation}
Therefore the equation $\hbox{Im}\, \mathcal{L}h(w)=0$ is equivalent to the linear system

\begin{eqnarray*}
(1+Q)^2 a_2&=&0\\
 Q^2a_3&=&0\\
\Big(\frac{1-Q^2}{2}n-1-Q^n\Big)\big(a_{n+1}-Q a_{n-1}\big)&=& 0, \quad \forall n\geq 3.
\end{eqnarray*}

Since $Q\in (0,1)$ then necessarily 
$$
a_2=a_3=0.
$$
The last equation is equivalent to
\begin{equation}\label{cond1}
\Big(1+Q^n-\frac{1-Q^2}{2} n\Big)\big( a_{n+1}- Qa_{n-1}\big)=0, \quad\forall n\geq 3.
\end{equation}

Let $m\geq 3$, then we know from the beginning of this section the existence of only one solution  $Q=Q_m\in(0,1)$ of the equation
$$
1+Q^{m}-\frac{1-Q^2}{2} m=0.
$$
Moreover, the left part of this equality defines a strictly decreasing function in $m$ implying that
$$
1+Q^{n}-\frac{1-Q^2}{2} n\neq0,\quad \forall n\neq m.
$$
Thus \eqref{cond1} is equivalent to
$$
a_{n+1}=Q a_{n-1},\forall n\geq 3, \quad n\neq m.
$$
Thus the dynamical system \eqref{cond1} combined with the vanishing  two first values admits the following  solutions
$$
 \forall n\geq 0, a_{m+1+2n}={Q^n} a_{m+1}\quad \hbox{and} \quad 0  \quad \hbox{otherwise}
$$
 This means that 
 the associated kernel  is one dimensional generated by the eigenfunction 
$$
v_{m}(w)=\sum_{n\geq0}{Q^n\, w^{m+1+2n}}=\frac{w^{m+1}}{1-Qw^2}\cdot
$$

${(\bf{3})}$ Let $Q\in \mathcal{S}$ and $m$ being the frequency such that $Q=Q_m$. We will show that the range $R(\mathcal{L}_{Q_m})$  coincides with the closed subspace 
$$
\mathcal{Y}\triangleq\Big\{g\in C^\alpha(\mathbb{T});\quad  g=\sum_{n\geq1\atop\\ n\neq m} g_{n+1}\,e_n, \quad g_n\in\RR\Big\}.
$$
From  \eqref{Eqz1} and \eqref{hm1} one sees that the range of $\mathcal{L}_Q$ is contained in $\mathcal{Y}$. Conversely, let $g\in \mathcal Y$ we shall look for $h(w)=\sum_{n\geq2}a_n w^n\in C^{1+\alpha}(\mathbb{T})$ such that $\mathcal{L}_Qh=g.$ Once again from \eqref{Eqz1} this is equivalent to
\begin{eqnarray*}
-\frac12(1+Q)^2 a_2&=&g_2,\\
 -2Q^2a_3&=&g_3,\\
\Big(\frac{1-Q^2}{2}n-1-Q^n\Big)\big(a_{n+1}-Q a_{n-1}\big)&=& g_{n+1}, \quad \forall n\geq 3, \quad n\neq m.
\end{eqnarray*}
This determines uniquely the sequence $(a_n)_{2\le n\le m}$ and for $n\geq m+1$ one has 
 the recursive formula
$$
a_{n+1}-Q a_{n-1}=\frac{g_{n+1}}{\frac{1-Q^2}{2}n-1-Q^n},\quad \forall n\geq m+1.
$$
The only free coefficient is $a_{m+1}$ and therefore the solutions of the above system form one-dimensional affine space. To prove that any pre-image   $h$ belongs to $C^{1+\alpha}(\mathbb{T})$ it suffices to show it for the function $H(w)=\sum_{n\geq m+2}a_n w^n.$ Set    $$
 \quad G(w)=\sum_{n\geq m+1}\frac{g_{n+1}}{\frac{1-Q^2}{2}n-1-Q^n}w^{n}, \quad R_m(w)=\sum_{n\geq m+1} g_{n+1} w^n.
$$
Then
\begin{eqnarray*}
H(w)&=&w\sum_{n\geq m+1}a_{n+1} w^n\\
&=&wQ\sum_{n\geq m+1}a_{n-1} w^n+\sum_{n\geq m+1}\frac{g_{n+1}}{\frac{1-Q^2}{2}n-1-Q^n}w^{n+1}\\
&=& w^2Q\sum_{n\geq m}a_{n} w^{n}+wG(w)\\
&=& w^2 Q H(w)+w^2 Q(a_m w^m+a_{m+1} w^{m+1})+wG(w).
\end{eqnarray*}
Therefore 
$$
H(w)=\frac{1}{1-Q w^2}\Big( w^2 Q(a_m w^m+a_{m+1} w^{m+1})+wG(w)\Big).
$$
The problem reduces then to  check that $G\in C^{1+\alpha}(\mathbb{T}).$ We split $G$  into two terms as follows
\begin{eqnarray*}
G(w)&=&\sum_{n\geq m+1}\frac{g_{n+1}}{\frac{1-Q^2}{2}n-1}w^{n}+\sum_{n\geq m+1}\frac{Q^n g_{n+1}}{(\frac{1-Q^2}{2}n-1) (\frac{1-Q^2}{2}n-1-Q^n)}w^{n}\\
&=&G_1+G_2.
\end{eqnarray*}
Since the sequence $(g_n)$ is bounded then  for large $n$ one gets
\begin{eqnarray*}
 \frac{Q^n |g_{n+1}|}{(\frac{1-Q^2}{2}n-1) (\frac{1-Q^2}{2}n-1-Q^n)}&\le C \frac{Q^n}{(1-Q^2)^2}.
\end{eqnarray*}
This shows that $G_2\in C^k(\mathbb{T})$ for all $k\in \NN$. Let us now prove that $G_1\in C^{1+\alpha}(\mathbb{T}).$ First from the embedding $C^\alpha(\mathbb{T})\hookrightarrow L^\infty(\mathbb{T})\hookrightarrow L^2(\mathbb{T})$ one obtains
\begin{eqnarray*}
\sum_{n}|g_{n+1}|^2\lesssim \|g\|_{C^\alpha}^2.
\end{eqnarray*}
Therefore by Cauchy-Scwharz
\begin{eqnarray*}
\|G_1\|_{L^\infty}&\lesssim& \sum_{n\geq m+1}\frac{|g_{n+1}|}{n}\\
&\lesssim&\Big(\sum_{n\geq m+1}|g_{n+1}|^2\Big)^{\frac12}\\
&\lesssim& \|g\|_{C^\alpha}.
\end{eqnarray*}
It remains to prove that $G_1^\prime\in C^\alpha(\mathbb{T}).$ Differentiating term by term the series we get
\begin{eqnarray*}
G_1^\prime&=&\sum_{n\geq m+1}\frac{n\,g_{n+1}}{\frac{1-Q^2}{2}n-1}w^{n-1}\\
&=& \frac{2}{1-Q^2}\sum_{n\geq m+1}g_{n+1}w^{n-1}+ \frac{2}{1-Q^2}\sum_{n\geq m+1}\frac{\,g_{n+1}}{\frac{1-Q^2}{2}n-1}w^{n-1}\\
&=& \frac{2}{1-Q^2}\frac1wR_m(w)+\frac{4}{(1-Q^2)^2}\sum_{n\geq m+1}\frac{\,g_{n+1}}{n}w^{n-1}\\
&+&\frac{4}{(1-Q^2)^2}\sum_{n\geq m+1}\frac{\,g_{n+1}}{n(\frac{1-Q^2}{2}n-1)}w^{n-1}\\
&\triangleq& G_3(w)+G_4(w)+G_5(w).
\end{eqnarray*}
The function $G_3$ is clearly in $C^\alpha(\mathbb{T})$ according to the assumption $g\in C^\alpha$. The function $G_4$ belongs to $L^\infty(\mathbb{T})$  and $G_4^\prime\in C^\alpha(\mathbb{T})$. Indeed, 
\begin{eqnarray*}
\|G_4\|_{L^\infty}&\lesssim& \sum_{n\geq p+1}\frac{|g_{n+1}|}{n}\\
&\lesssim&\Big(\sum_{n\geq m+1}|g_{n+1}|^2\Big)^{\frac12}\\
&\lesssim& \|G\|_{L^\infty}.
\end{eqnarray*}
Moreover
$$
(wG_4)^\prime=\frac{4}{(1-Q^2)^2}\sum_{n\geq m+1}g_{n+1}w^{n-1}=\frac{4}{(1-Q^2)^2}\frac1w R_m(w).
$$
This gives $(wG_4)^\prime \in C^\alpha(\mathbb{T})$ and thus $G_4\in C^{1+\alpha}(\mathbb{T})$. On the other hand
$$
wG_5^\prime(w)=\frac{4}{(1-Q^2)^2}\sum_{n\geq m+1}\frac{\,g_{n+1}}{\frac{1-Q^2}{2}n-1}w^{n-1}.
$$
Arguing as before we see that $wG_5^\prime\in L^\infty(\mathbb{T})$ and belongs also to  $C^{\alpha}(\mathbb{T})$. This shows that $G_1^\prime\in C^{\alpha}(\mathbb{T})$ which gives that  $G_1\in C^{1+\alpha}(\mathbb{T})$. This shows finally that any pre-image  of $g$ belongs to the \mbox{space $X.$}

$(\bf{4})$ Let $m\geq3$ be an integer and $Q=Q_m$ the associated element in the set $\mathcal{S}.$ We have seen that the kernel is one-dimensional generated by 
$$v_m(w)=\frac{w^{m+1}}{1-Qw^2}=\sum_{n\geq0} Q^n w^{m+1+2n}.
$$ 
We shall compute $\partial_Q\partial_f F(Q_m,0)v_m$ which coincides with $\big\{\partial_Q\mathcal{L}_{Q}v_m\big\}_{Q=Q_m}$. The transversality condition that we shall check is 
$$
\big\{\partial_Q\mathcal{L}_{Q}v_m\big\}_{Q=Q_m}\notin R(\mathcal{L}_{Q_m}).
$$
From the structure of the range of $\mathcal{L}$ this is equivalent to prove that the coefficient of $e_m$ in the decomposition $\big\{\partial_Q\mathcal{L}_{Q}v_m\big\}_{Q=Q_m}$ is not zero. From  \eqref{Eqz1} and \eqref{hm1} this coefficient is given by
\begin{eqnarray*}
\{\partial_Q(d_m-\widehat{d}_m)\}_{Q=Q_m}&=&-m(Q_m+Q_m^{m-1})(a_{m+1}-Q_ma_{m-1})+(\frac{1-Q_m^2}{2}m-1-Q_m^m)a_{m-1}\\
&=&-m(Q_m+Q_m^{m-1})+0\\
&\neq&0.
\end{eqnarray*}
We have used the fact that $a_{m+1}=1$ and $a_{m-1}=0.$
This achieves the transversality assumption and therefore the proof of Proposition \ref{prop-spec11} is complete.
\end{proof}

\begin{ackname}
We are indebted to Joan Verdera for the fruitful discussion on this subject. This work was partially supported by the grants of Generalitat de Catalunya 2014SGR7,
Ministerio de Economia y Competitividad MTM 2013-4469,
ECPP7- Marie Curie project MAnET  and the ANR project Dyficolti ANR-13-BS01-0003- 01.\end{ackname}

\end{document}